\newtheorem{theorem}{Theorem}[section]
\newtheorem{lemma}[theorem]{Lemma}
\newtheorem{corollary}[theorem]{Corollary}
\theoremstyle{definition}
\newtheorem{definition}[theorem]{Definition}
\newtheorem{example}[theorem]{Example}
\theoremstyle{remark}
\newtheorem{remark}[theorem]{Remark}
\numberwithin{equation}{section}
\newcommand{\mc}{\mathcal}
\newcommand{\C}{{\mathbb C}}
\newcommand{\Z}{{\mathbb Z}}
\newcommand{\N}{{\mathbb N}}
\newcommand{\bU}{{\mathbb U}}
\newcommand{\bA}{{\mathbb A}}
\newcommand{\CB}{{\mathcal B}}
\newcommand{\CF}{{\mathcal F}}
\newcommand{\CP}{{\mathcal P}}
\newcommand{\CT}{{\mathcal T}}
\newcommand{\id}{{\rm{id}}}
\newcommand{\mf}{\mathfrak}
\newcommand{\fg}{{\mf g}}
\newcommand{\cB}{\mc B}
\newcommand{\be}{\begin{equation}}
\newcommand{\ee}{\end{equation}}
\newcommand{\inv}{^{-1}}
\newcommand{\ul}{\underline}
\newcommand{\lr}{\longrightarrow}
\newcommand{\wt}{\widetilde}
\newcommand{\End}{{\rm{End}}}
\newcommand{\Hom}{{\rm{Hom}}}
\newcommand{\GL}{{\rm{GL}}}
\newcommand{\Sym}{{\rm{Sym}}}
\newcommand{\fso}{{\mathfrak {so}}}
\newcommand{\osp}{{\mathfrak {osp}}}
\newcommand{\Sp}{{\rm Sp}}
\newcommand{\Or}{{\rm O}}
\newcommand{\ot}{\otimes}
\newcommand{\OSp}{{\rm OSp}}
\newcommand{\SO}{{\rm SO}}
\newcommand{\ve}{{\varepsilon}}
\begin{document}

\normalfont

\title[Invariants and an enhanced Brauer category]{Invariants of the special orthogonal group and an enhanced Brauer category}

\author{G.I. Lehrer and R.B. Zhang}
\thanks{This research was supported by the Australian Research Council}
\address{School of Mathematics and Statistics,
University of Sydney, N.S.W. 2006, Australia}
\email{gustav.lehrer@sydney.edu.au, ruibin.zhang@sydney.edu.au}
\begin{abstract}
We first give a short intrinsic, diagrammatic proof of the First Fundamental Theorem of invariant theory (FFT)
 for the special orthogonal group
$\SO_m(\C)$, given the FFT for $\text{O}_m(\C)$. We then define, by means of a 
presentation with generators and relations, an enhanced Brauer category $\wt\cB(m)$ by adding a single generator
to the usual Brauer category $\CB(m)$, together with four relations. 
We prove that our category $\wt\cB(m)$ is actually (and remarkably) {\em equivalent} to the category
of representations of $\SO_m$ generated by the natural representation. The FFT for $\SO_m$ amounts to the surjectivity 
of a certain functor $\CF$ on $\Hom$ spaces, while the Second Fundamental Theorem for $\SO_m$ says simply
 that $\CF$ is injective on $\Hom$ spaces. This theorem provides a diagrammatic means
of computing the dimensions of spaces of homomorphisms between tensor modules for $\SO_m$ (for any $m$). These methods will be applied to the
case of the orthosymplectic Lie algebras $\osp(m|2n)$, where the super-Pfaffian enters, in a future work.
\end{abstract}
\date{\today}
\maketitle
\section{Introduction}
\subsection{History} If a group $H$ acts on a vector space $W$, the element $w\in W$ is said to be {\em invariant} if $hw=w$
for all $h\in H$. The set of invariants $W^H$ is a subspace of $W$, and in general the problem of finding a spanning
set for $W^H$ is known as the first fundamental problem of invariant theory, and that of describing all
relations among the elements of the spanning set is the second fundamental problem.
When $W$ has extra structure, e.g. if it is an associative algebra, or a commutative algebra, the first and second problems
are recast as asking for generators and relations respectively in the appropriate category. In the two examples given, one
would ask for algebra generators of the invariants, together with generators of the ideal of relations among them.

If $V=\C^m$, $H=\GL(V)$ and $W=\End_\C(V^{\ot r})$ for some positive integer $r\geq 1$, Schur \cite{Sch1} completely solved the 
fundamental problems, and proved what are now known as the first and second fundamental theorems (FFT and SFT) of invariant theory.
Note that in this case, $W$ is an associative, non-commutative (if $r>1$) algebra. 
It was subsequently shown that his theorem is equivalent to solving
the corresponding problem for the commutative algebra $\C[W]$, where $W=V^{\oplus r}\oplus {V^*}^{\oplus s}$,
which is just the symmetric algebra on $W^*$. These results are
 well known, and we shall not rehearse them here. Suffice it to say that they have led to applications in the representation theory of Lie groups
 (cf. \cite{W}) and to an enormous literature in what has become known as Schur-Weyl duality, in many and varied settings.
 
 When $V$ above is endowed with a non-degenerate symmetric (resp. skew) bilinear form $(\;,\;)$ and $H=\Or(V)$ 
(resp. $\Sp(V)$) is the full symmetry group of $(V,(\;,\;))$, 
Weyl \cite{W} has proved results which could be interpreted as versions of the FFT and SFT for 
the $H$ action on the commutative algebra $\C[V^{\oplus r}]$. His FFT asserts that the space of invariant functions
is generated as a commutative algebra by contractions using the given form, while his SFT asserted that all relations among these generators 
are generated by the obvious determinantal condition.

 This work has an important reinterpretation in terms of diagrams, 
which was given in 1937 by Brauer \cite{B}. With $V$ and $H$ as in the previous paragraph,
he defined certain algebras $\CB_r$ in terms of diagrams, as well as 
homomorphisms $\eta_r:\CB_r\lr \End_{H}(V^{\ot r})$ ($r=1,2,3,\dots$) whose surjectivity 
 is equivalent to the FFT in this setting. Brauer left open the SFT in this formulation,
which amounts to describing the kernel of $\eta_r$, an ideal of $\CB_r$. This
question has only recently been resolved \cite{LZ4, LZ5} by the present authors, who produced
 an explicit ideal generator in $\CB_r$ for the kernel of $\eta_r$, and in this way, provided a presentation 
of $\End_{H}(V^{\ot r})$.
 
 The above circle of ideas are most efficiently expressed in the language of the Brauer category \cite{LZ5}.
 
 There are now many vast generalisations of this work in which the base ring $\C$ is replaced by other domains, 
 and in which `group' may be replaced by Lie algebra, quantum group, or another algebraic object which has
an action or co-action (see, e.g., below).
 These generalisations are often of central importance in addressing difficult questions concerning multiplicities and
 character formulae and are related to geometric questions such as intersection cohomology of the flag varieties.
 They are part of the vast subject of `invariant theory', which includes geometric invariant theory, a subject distilled
 by Mumford \cite{Mu} from many persistent mathematical themes through many ages. If $H$ is an algebraic
 group over $\C$ acting on an affine variety $X$, then $H$ acts on the coordinate ring $\C[X]$, and the properties of the 
 `orbit variety' $X// H$ are reflected in the properties of the ring $\C[X]^H$ of $H$-invariants.   
 
 In this work we shall be concerned with a particular issue in the orthogonal case, which arises in general
in the study of the invariants of the orthosymplectic Lie algebras. Consider the situation described above, when the form $(\;,\;)$
is symmetric. Taking $W=\End_\C(V^{\ot r})$ as above, but with $H=\SO(V)$ rather than $\Or(V)$, evidently there are more invariants, but 
the structure of the ring of invariants has always been somewhat imperfectly understood. 
 It is this case and its generalisations upon which this work is focussed.

\subsection{This work}  In recent work \cite{DLZ, LZ6} we have proved a first fundamental theorem (FFT) of invariant theory for 
the orthosymplectic group super scheme $\OSp(V)$, where  $V$ is a super space over $\C$. The proof is geometric and
applies to the classical groups as special cases; it
does not involve the Capelli identities which appear in traditional proofs (cf. \cite[Thm 10.2A]{W}, \cite[App. F]{FH}
or \cite{Ri}).

These results all have a categorical formulation in terms of the Brauer category. 

In this note we give a short diagrammatic argument to deduce the FFT for $\SO(V)$ from that for $\Or(V)$. This is direct, and involves no
induction on dimension, which appears in traditional proofs such as those in \cite{FH, Ri}. We then
define an enhancement of the Brauer category, which is still described in terms of a presentation using diagrams,
with the generators and relations which define the Brauer category (cf. \cite{LZ5}) plus one extra generator and several extra relations 
which involve it. We prove a FFT for $\SO_m$ for our new category, but the surprising fact to emerge is that 
one can prove that the new relations
include the generator of the kernel of the functor from the Brauer category to the category of tensor representations.
This implies that the maps on $\Hom$ spaces are all automatically isomorphisms, i.e. that our new category is equivalent
to the category of tensor representations of $\SO_m$. In particular, the dimensions of the endomorphism 
algebras may be computed formally in our category.

The calculation which yields that the ideal of relations we give here includes the kernel of the maps from the $\Hom$
spaces in our abstract category to those in the category of tensor representations of $\SO_m$ appears for $m=3$
in \cite{BE}, which was part of our inspiration for this work.

Although our results here are framed for spaces over $\C$, they actually apply without extra effort 
to any field of characteristic $p>m=\dim V$, and with a little more effort to any field of odd characteristic.

In a future work we intend to apply similar methods to the case of the orthosymplectic Lie super algebra
$\osp(m|2n)$. In this case, the space of super Pfaffians (cf. \cite{LZ8}) will come into play as the harmonic generators of
those invariants of $\osp(m|2n)$ which are semi invariant for the super group scheme $\OSp(m|2n)$.

\section{Preliminaries}
Let $V=\C^m$ be equipped with a non-degenerate symmetric bilinear form $(\;,\;)$. Let 
$G:=\Or(V)$ be its isometry group, and $G'=\SO(V)$ the identity component of $G$. We refer to
$V$ as the natural module for $G$ or $G'$, and we shall be concerned with the invariants $(V^{\ot r})^{G'}$. 

Assuming the  first fundamental theorem (FFT) of invariant theory for $G$,
we give a short proof of the FFT of the invariant theory for the 
special orthogonal group $G'$, or equivalently, of the orthogonal Lie algebra
$\fg:=\fso(V)$,
in the context of tensors powers of the natural module.  That is,
we shall give a spanning set for $(V^{\ot r})^{G'}$.
The result is of course classical, but our proof is particularly short and 
depends only on diagrammatic ideas.
It also extends to some extent to the super case, i.e. to the case where $V$ is a super space and $G=\OSp(V)$
\cite{LZ6}. 

If $M$ is any finite dimensional $G$-module, then {
$
M^\fg=M^{G'}=M^G\oplus M^{G, \det},
$
where 
\[
M^{G, \det}=\{m\in M\mid g(m)=\det(g) m, \  \forall  g\in G \}.
\]
}
Our aim is to explicitly describe $M^{G'}$ when $M=V^{\ot r}$.
We assume that $(V^{\otimes r})^G$ is known, and given by the FFT for $G$
as stated in Theorem \ref{thm:fft} below; hence we need only consider 
$(V^{\otimes r})^{G, \det}$. To state our theorem, we need the element $\check C\in V\ot V$ and its dual $\hat C$ 
introduced in \S\ref{ss:brauer} below. An element $D\in V^{\ot r}$  is {\em harmonic} if 
\[
(\hat C\ot(\id_V)^{\ot r-2})\circ\sigma\circ D=0\text{ for all }\sigma\in\Sym_r.
\]

\begin{theorem} \label{thm:so-inv}  Assume that for some integer $r_c>0$,
there exists a harmonic element $\Lambda$ in $(V^{\otimes r_c})^{G, \det}$ satisfying the condition $(\Lambda, \Lambda)\ne 0$. 
Then  $(V^{\otimes r})^{G, \det}\ne 0$ if and only if $r-r_c\ge 0$ is even, and in this case, 
\[
(V^{\otimes r})^{G, \det}= \C\Sym_r\left(\Lambda\otimes \check{C}^{\otimes \frac{r-r_c}{2}}\right).
\]

\end{theorem}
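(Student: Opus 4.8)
The plan is to reduce everything to the space $M^G$ of genuine invariants, which is controlled by the FFT for $G=\Or(V)$ (Theorem \ref{thm:fft}), by exhibiting an explicit isomorphism between $M^{G,\det}$ and a translate of a genuine invariant space. First I would observe that the hypothesis produces, for the distinguished value $r_c$, a nonzero semi-invariant $\Lambda\in(V^{\ot r_c})^{G,\det}$ with $(\Lambda,\Lambda)\neq 0$; since the bilinear form on $V$ extends to a $G$-invariant form on each $V^{\ot r}$, pairing with $\Lambda$ is a $G$-equivariant map $V^{\ot r_c}\to\C_{\det}$ (the one-dimensional determinant representation), and the condition $(\Lambda,\Lambda)\neq 0$ guarantees it is nonzero on $\Lambda$ itself. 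The key structural point is that tensoring with $\Lambda$ gives a map $V^{\ot r}\to V^{\ot(r+r_c)}$ carrying $G$-invariants to $G,\det$-semi-invariants (because $\det^2=1$), and conversely contracting against $\Lambda$ using the form, suitably normalised by $(\Lambda,\Lambda)\inv$, sends $(V^{\ot(r+r_c)})^{G,\det}$ back into a tensor space one can identify with $(V^{\ot r})^G$ up to applying $\Sym$.

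Concretely, I would argue that the assignment $D\mapsto \Sym(D\ot\Lambda)$ (appropriately interpreted as an element of $V^{\ot(r+r_c)}$) and its partial inverse, contraction with $\Lambda$, are mutually inverse bijections between $(V^{\ot r})^G$ and $(V^{\ot(r+r_c)})^{G,\det}$ once one restricts attention to harmonic pieces; here the harmonicity of $\Lambda$ is exactly what is needed so that no lower contractions of $\Lambda$ against itself survive, and $(\Lambda,\Lambda)\neq0$ is what makes the normalisation legitimate. Combining this with the FFT for $G$, which says $(V^{\ot r})^G=0$ unless $r$ is even and is spanned by the symmetrised powers $\Sym_r(\check C^{\ot r/2})$ when $r$ is even, immediately yields both the parity statement — $(V^{\ot s})^{G,\det}\neq0$ iff $s-r_c\geq 0$ and $s-r_c$ is even — and the explicit spanning set $\C\,\Sym_r(\Lambda\ot\check C^{\ot(r-r_c)/2})$.

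There are two points requiring care. The first is the bookkeeping of the symmetrisation operator $\Sym_r$ and the embeddings $V^{\ot r}\hookrightarrow V^{\ot(r+r_c)}$: one must check that symmetrising $D\ot\Lambda$ and then contracting recovers a nonzero multiple of the image of $D$ under $\Sym$, and that one does not accidentally collapse the space — this is where the harmonic hypothesis on $\Lambda$ is used decisively, since a nonharmonic $\Lambda$ could have its "diagonal" contractions interfere. The second is to verify that every element of $(V^{\ot(r+r_c)})^{G,\det}$ actually arises this way, i.e. surjectivity of the contraction map onto $(V^{\ot r})^G$; this follows because any $G,\det$-semi-invariant times $\Lambda$ is a genuine $G$-invariant in $V^{\ot(r+2r_c)}$, to which the FFT for $G$ applies, and then one peels off one copy of $\Lambda$. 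I expect the main obstacle to be precisely this round-trip computation with $\Sym_r$ and the contractions — making sure the normalisation constant is exactly $(\Lambda,\Lambda)$ (or a nonzero power of it) and that the diagrammatic cancellations go through cleanly — rather than any conceptual difficulty; the conceptual content is entirely contained in the observation that $\det$ has order two and that $\Lambda$ with $(\Lambda,\Lambda)\neq0$ behaves like an invertible element for the contraction pairing.
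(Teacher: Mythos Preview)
Your proposal is correct and follows essentially the same strategy as the paper: identify $(V^{\ot r})^{G,\det}$ with a piece of a genuine $G$-invariant space by tensoring or contracting with $\Lambda$, then invoke the FFT for $\Or(V)$ and use harmonicity of $\Lambda$ to kill all terms in which an arc of a Brauer diagram has both ends attached to $\Lambda^*$.

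The paper packages this a bit more economically than you do. Rather than setting up a putative bijection $(V^{\ot r})^G\leftrightarrow (V^{\ot (r+r_c)})^{G,\det}$ and then separately checking injectivity and surjectivity, the paper introduces the orthogonal projection $\pi_{\Lambda,r}=\pi_\Lambda\ot\id_{V^{\ot r}}:V^{\ot(r_c+r)}\to\Lambda\ot V^{\ot r}$ and uses exactness of $(-)^G$ to obtain immediately
\[
\Lambda\ot (V^{\ot r})^{G,\det}=(\Lambda\ot V^{\ot r})^G=\pi_{\Lambda,r}\bigl((V^{\ot(r_c+r)})^G\bigr).
\]
This single equality replaces both of your ``points requiring care'': there is no round-trip computation to verify, and surjectivity is automatic because $\pi_{\Lambda,r}$ is a projection onto the relevant summand. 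In particular the detour you anticipate through $V^{\ot(r+2r_c)}$ is unnecessary. The vanishing for $r<r_c$ and the explicit spanning set for $r\ge r_c$ then drop out exactly as you describe, by applying \eqref{eq:fft2} to the right-hand side and reading off diagrammatically what survives the composition with $\Lambda^*$ (harmonicity) and what $\pi_{\Lambda,r}$ does to the remaining arcs.
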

\begin{remark}
Since any nonzero element  $\Lambda\in\wedge^{\dim V}V$ evidently
satisfies the conditions of Theorem \ref{thm:so-inv}, we recover
the FFT of invariant theory of the special orthogonal group (see \cite[Appendix F]{FH} and \cite[\S 10.2]{KP}).
\end{remark}

We prepare for the proof by explaining a diagrammatic way of viewing the problem.
\section{The category $\CT$.}

{
Let $\CT$ be the  full subcategory of the category of finite dimensional modules for  $G'=\SO(V)$ with objects $V^{\otimes r}$, where $r\in\N$. For any $i,j\in\N$,
let 
 \[
\CT_i^j:=\Hom_{G'}(V^{\ot i},V^{\ot j}).
\]}
Note that $\CT$ is naturally a tensor category: if $A\in\CT_i^j$ and $B\in \CT_r^s$, then $A\ot B\in \CT_{i+r}^{j+s}$
is defined in the obvious way. We also have a natural duality on $\CT$ arising from the fact that the form $(\;,\;)$ extends
naturally to a non-degenerate symmetric bilinear form, also denoted $(\;,\;)$ on $V^{\ot r}$ for any $r$, so that 
the dual space $(V^{\ot r})^*$ is naturally identified with  $V^{\ot r}$. If $\ul v\in V^{\ot r}$,
denote by $\ul v^*$ the element of $(V^{\ot r})^*$ defined by $\ul v^*(\ul w)=(\ul v,\ul w)$  for all $\ul w\in V^{\ot r}$.
Note that if $g\in\Or(V)$, then $g(\ul v^*)=(g\ul v)^*$.

For $A\in\CT_i^j$, the dual $A^*\in\CT_j^i$ is defined by 
\[
(A\ul v,\ul w)=(\ul v, A^*\ul w)
\]
for all $\ul v\in V^{\ot i}$ and $\ul w\in V^{\ot j}$.

\subsection{Diagrammatics} If $A\in\CT_i^j$, we denote $A$ by a diagram $D$:

\begin{tikzpicture}
\node at (0,1) {$D=$};
\node at (5,1) {$A$};
\node at (5,2) {$j$};
\node at (5,0) {$i$};
\node at (9,1) {$.$};
\draw (2,0.5) rectangle (8,1.5);
\draw (4,1.5)--(4,2.5);\draw (6,1.5)--(6,2.5);\draw (3,0.5)--(3,-0.5);\draw (7,0.5)--(7,-0.5);

\end{tikzpicture}

Then $A^*$ is represented by the reflection $D^*$ in a horizontal of the above diagram:

\begin{tikzpicture}
\node at (0,1) {$D^*=$};
\node at (5,1) {$A^*$};
\node at (5,2) {$i$};
\node at (5,0) {$j$};
\node at (9,1) {$.$};
\draw (2,0.5) rectangle (8,1.5);
\draw (3,1.5)--(3,2.5);\draw (7,1.5)--(7,2.5);\draw (4,0.5)--(4,-0.5);\draw (6,0.5)--(6,-0.5);

\end{tikzpicture}

Composition of morphisms is thought of as concatenation of diagrams.

An element $\ul v\in \left(V^{\ot r}\right)^{G'}$ may be thought of as the map in $\CT_0^r$ which sends $1\in \C$ to $\ul v$. Then $\ul v^*$ is the map
in $\CT_r^0$ which takes $\ul w$ to $(\ul v,\ul w)$. Diagramatically:

\medskip

\begin{tikzpicture}
\node at (0,1) {$\ul v=$};
\node at (5,1) {$\ul v$};
\node at (5,2) {$r$};
\node at (9,1) {$,$};
\draw (2,0.5) rectangle (8,1.5);
\draw (3,1.5)--(3,2.5);\draw (7,1.5)--(7,2.5);

\end{tikzpicture}

\medskip

and 

\medskip

\begin{tikzpicture}
\node at (0,1) {$\ul v^*=$};
\node at (5,1) {$\ul v^*$};
\node at (5,0) {$r$};
\node at (9,1) {$.$};
\draw (2,0.5) rectangle (8,1.5);
\draw (3,0.5)--(3,-0.5);\draw (7,0.5)--(7,-0.5);

\end{tikzpicture}

{
If $\ul v,\ul w\in\left(V^{\ot r}\right)^{G'}$, 
}
we may form the compositions $\ul v\circ\ul w^*\in\CT_r^r$ and $\ul w^*\circ\ul v\in\CT_0^0$,
and it is easily verified that 
\be\label{eq:comp}
\begin{aligned}
\ul v\circ\ul w^*:\ul x&\mapsto (\ul w,\ul x)\ul v\text{ and }\\
\ul w^*\circ\ul v:1&\mapsto (\ul w,\ul v).\\
\end{aligned}
\ee

In terms of diagrams, \eqref{eq:comp} may be written

\medskip
{
\begin{tikzpicture}
\node at (5,3) {$\ul v$};
\node at (5,4) {$r$};

\draw (2,2.5) rectangle (8,3.5);


\draw (3,3.5)--(3,4.5);\draw (7,3.5)--(7,4.5);


\node at (10,2) {$=\phi_{\ul w^*,\ul v}\in\CT_r^r,$};

\node at (5,1) {$\ul w^*$};
\node at (5,0) {$r$};

\draw (2,0.5) rectangle (8,1.5);



\draw (3,0.5)--(3,-0.5);\draw (7,0.5)--(7,-0.5);


\end{tikzpicture}
}

\medskip

\noindent {where  $\phi_{\ul w^*,\ul v}(\ul x)=(\ul w,\ul x)\ul v$ for all $\ul x\in V^{\ot r}$,  and}

\bigskip

\begin{tikzpicture}
\node at (5,3) {$\ul w^*$};
\draw (2,2.5) rectangle (8,3.5);
\draw (3,1.5)--(3,2.5);\draw (7,1.5)--(7,2.5);
\node at (10,2) {$=(\ul v,\ul w)\in\CT_0^0=\C.$};

\node at (5,1) {$\ul v$};
\draw (2,0.5) rectangle (8,1.5);

\end{tikzpicture}

\subsection{The Brauer subcategory $\CB\subset\CT$} \label{ss:brauer}
Let $e_1,\dots,e_m$ be an orthonormal basis of $V$. The element
$\check C:=\sum_i e_i\ot e_i\in V\ot V$ is independent of the basis, and $\Or(V)$-invariant.
Regarded as an element of $\CT_0^2:=\Hom_{\CT}(0,2)$, its dual is $\check C^*=\hat C\in\CT_2^0$, where 
for $v,w\in V$, $\hat C(v\ot w)=(v,w)$ (since $(\check C,v\ot w)=(v,w)$). Note that the relations
\eqref{eq:comp} show that $\hat C\circ\check C=m$ and $\check C\circ\hat C:v\ot w\mapsto (v,w)\check C$.

We recall the definition of the Brauer category $\CB=\CB(m)$ (over $\C$) from \cite{LZ5}. It has objects $\N$ and morphisms
which are generated by the four morphisms $I,U,A=U^*$ and $X$ in $\CB_1^1, \CB_0^2,\CB_2^0$ and $\CB_2^2$ respectively.
Moreover we have a functor $\CF$ from $\CB$
to $\CT$ which takes $I$ to $\id_V$, $U$ to $\check C$,  $A$ to $\hat C$
and $X$ to $\tau\in\CT_2^2$ given by $\tau(v\ot w)=w\ot v$.  We abuse language by referring to the image of this functor as the Brauer subcategory 
$B:=\CF(\CB)\subset\CT$, and to its morphisms $B_i^j$ as Brauer morphisms. Note that $\C\Sym_r\subset B_r^r$.
Diagrammatically, the morphisms $\id_V,\check C,\hat C$ and $\tau$ are respectively the images of the following diagrams:

\begin{tikzpicture}
\draw (0,0)--(0,1);
\draw (2,1).. controls (2,-0.35) and (3,-0.35).. (3,1);\draw (4,0).. controls (4,1.35) and (5,1.35).. (5,0);
\node at (1.5,0){,};\node at (3.5,0){,};\node at (5.5,0){,};
\draw (6,0)--(7,1);\draw (7,0)--(6,1);
\node at (0,-.5){$\id_V$};\node at (2.5,-.5){$\check C$};\node at (4.5,-.5){$\hat C$};\node at (6.5,-.5){$\tau$};
\node at (7.5,0){.};
\end{tikzpicture}

The FFT for $\Or(V)$ may be stated as follows.
\begin{theorem}\label{thm:fft} In the above notation, the functor $\CF:\CB(m)\lr \CT^{\Or(V)}$ is full,
where $\CT^{\Or(V)}$ is {
the subcategory of $\CT$ with $\Or(V)$-invariant morphisms}. 
In particular, for any non-negative integers $i,j$, we have
\be\label{eq:fft1}
(\CT_i^j)^{\Or(V)}=B_i^j.
\ee
Equivalently, $(V^{\ot r})^{\Or(V)}=0$ if $r$ is odd, while if $r=2d$ is even, then 
\be\label{eq:fft2}
(V^{\ot 2d})^{\Or(V)}=\C\Sym_{2d}(\check C^{\ot d}).
\ee
\end{theorem}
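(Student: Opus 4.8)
I would prove \thmref{thm:fft} --- the FFT for $\Or(V)$ in diagrammatic form --- by reducing it to the classical form of Weyl's FFT for the orthogonal group. The three displayed assertions are equivalent, so it suffices to establish one of them and derive the others. I would take \eqnref{eq:fft2} as the core statement: $(V^{\ot 2d})^{\Or(V)}=\C\Sym_{2d}(\check C^{\ot d})$, together with the vanishing in odd degree. The vanishing for odd $r$ is immediate since $-\id_V\in\Or(V)$ acts on $V^{\ot r}$ by $(-1)^r$. For the even case, the inclusion $\C\Sym_{2d}(\check C^{\ot d})\subseteq (V^{\ot 2d})^{\Or(V)}$ is clear because $\check C$ is $\Or(V)$-invariant (as noted in \S\ref{ss:brauer}) and the symmetric group permutes tensor factors $\Or(V)$-equivariantly.

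The substantive direction is the reverse inclusion, and here the natural route is to use the classical FFT. Under the duality identifying $(V^{\ot 2d})^*$ with $V^{\ot 2d}$ via the form, the space $(V^{\ot 2d})^{\Or(V)}$ corresponds to the space of $\Or(V)$-invariant multilinear functions on $2d$ copies of $V$. By Weyl's theorem (\cite[\S 10.2]{KP}, \cite[App. F]{FH}), every such invariant is a linear combination of products of $d$ pairwise contractions $(v_{a},v_{b})$, i.e.\ of functions of the form $\ul w\mapsto\prod_{k}(w_{a_k},w_{b_k})$ where $\{(a_k,b_k)\}$ is a perfect matching of $\{1,\dots,2d\}$. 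Each such function is exactly $\hat C^{\ot d}\circ\sigma$ for a suitable permutation $\sigma$, i.e.\ the dual of $\sigma\inv(\check C^{\ot d})$; summing over the matching orbit and using that the stabiliser of a matching inside $\Sym_{2d}$ acts trivially on $\check C^{\ot d}$, one sees each such invariant lies in $\C\Sym_{2d}(\check C^{\ot d})$. This gives \eqnref{eq:fft2}. Then \eqnref{eq:fft1} follows by the standard adjunction: $\CT_i^j=\Hom_{G}(V^{\ot i},V^{\ot j})\cong(V^{\ot i}\ot(V^{\ot j})^*)^{G}\cong(V^{\ot(i+j)})^{G}$ using the self-duality of $V$, and tracking how $\Sym_{i+j}(\check C^{\ot\frac{i+j}{2}})$ transports across this isomorphism shows it becomes precisely $B_i^j$ (one checks that closing off legs of Brauer diagrams with cups/caps realises every perfect matching of $i+j$ points as a Brauer morphism $V^{\ot i}\to V^{\ot j}$). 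Fullness of $\CF$ onto $\CT^{\Or(V)}$ is then just the assertion $B_i^j=(\CT_i^j)^{\Or(V)}$ for all $i,j$.

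\textbf{Main obstacle.} The one point needing care is the bookkeeping in passing from perfect matchings of $2d$ points (the combinatorial output of Weyl's theorem) to the span $\C\Sym_{2d}(\check C^{\ot d})$, and then across the $\Hom$-space adjunction to the Brauer morphisms $B_i^j$: one must verify that the map sending a matching to its Brauer diagram is surjective onto the matchings and compatible with the duality, and that no scalar ambiguities arise from the trivial action of matching-stabilisers. This is routine but is the heart of the translation between the classical and diagrammatic formulations; everything else (the odd-degree vanishing, the easy inclusion, the adjunction isomorphism) is formal. I would cite \cite{W} for the classical FFT itself rather than reprove it, since an independent geometric proof is given in \cite{DLZ, LZ6} and is not needed here.
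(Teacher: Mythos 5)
Your proposal is sound, but note that the paper does not actually prove Theorem~\ref{thm:fft} at all: it is quoted as known input (``Assuming the first fundamental theorem of invariant theory for $G$\dots''), with the classical proofs of Weyl and the geometric proof of \cite{DLZ, LZ6} standing behind it, and the paper's real content begins only with the deduction of the $\SO(V)$ statement from it. What you supply is the standard translation of Weyl's tensor-form FFT into the diagrammatic/categorical formulation, and it is correct: the odd-degree vanishing via $-\id_V\in\Or(V)$, the easy inclusion from $\Or(V)$-invariance of $\check C$, the identification of each product of contractions over a perfect matching with (the dual of) a single element $\sigma(\check C^{\ot d})$ of the $\Sym_{2d}$-orbit, and the passage between \eqref{eq:fft1} and \eqref{eq:fft2} via the self-duality isomorphism $\Hom_{G}(V^{\ot i},V^{\ot j})\cong (V^{\ot(i+j)})^{G}$, which is exactly the mechanism the paper invokes elsewhere through the maps $\bU_p^q$, $\bA_q^r$ of \cite[Cor.~2.8]{LZ5}. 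Two small remarks: no summation over a matching orbit is needed, since the stabiliser of a matching already acts trivially on $\check C^{\ot d}$ (each contraction product \emph{is} a single $\sigma(\check C^{\ot d})$), and in checking that matchings correspond to Brauer morphisms $B_i^j$ you should note that the image of a Brauer diagram under $\CF$ depends only on its underlying matching because the form, hence $\check C$ and $\hat C$, is symmetric. So your route is a legitimate proof of the quoted theorem from the classical FFT, whereas the paper simply cites it; the paper's choice buys brevity and independence from the Capelli-type classical arguments (since \cite{DLZ, LZ6} give a geometric proof), while yours makes the equivalence of the classical and diagrammatic formulations explicit.
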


\begin{remark}\label{rem:fftdiag}
(i) In terms of diagrams, the theorem states that $(V^{\ot r})^{\Or(V)}\subseteq\CT_0^r$ is spanned by diagrams of the 
following form.

\bigskip
\begin{tikzpicture}
\draw (0,3).. controls (.5,0) and (3,0).. (3,3);
\draw (1,3).. controls (1,1) and (5,1).. (5,3);
\draw (2,3).. controls (2,0) and (6,0).. (6,3);
\draw (4,3).. controls (4,0) and (8,0).. (8,3);
\draw (7,3).. controls (7,0) and (10,0).. (10,3);
\node at (9,2.5) {$\cdots$};
\end{tikzpicture}

(ii) It follows from Theorem \ref{thm:fft} that if $\ul\xi\in  (V^{\ot 2d})^G$,
where $G=\Or(V)$ (so that $\ul\xi^*\in\CT_{2d}^0$), and if $\ul\xi^*\circ D= 0$
for each diagram of the above form, then $\ul\xi=0$. 

(iii) The diagram shown represents an element of $\CT_0^r\simeq(V^{\ot r})^G$. Its dual in $\CT_r^0$, 
which is the diagram obtained by reflecting the one shown in a horizontal line, applied to $\ul v=v_1\ot\dots\ot v_r$
is the product of the contractions $(v_i,v_j)$ over pairs $(i,j)$ lying on a common arc.
\end{remark}

\section{Proof of Theorem \ref{thm:so-inv}.}

Suppose we have $\Lambda\in (V^{\ot r_c})^{G,\det}$ such that $(\Lambda,\Lambda)\neq 0$ and $\Lambda$ is harmonic.
Let  $\pi_{\Lambda}: V^{\otimes r_c} \longrightarrow\C\Lambda\subseteq V^{\ot r_c}$ be the
$G$-module homomorphism given by $\pi_\Lambda(\ul w)=\frac{(\Lambda, \ul w)}{(\Lambda, \Lambda)}\Lambda$ for any 
$\ul w\in V^{\otimes r_c}$.  For any integer $r\ge 0$,
let $\pi_{\Lambda, r} =\pi_{\Lambda}\otimes \id_{V^{\otimes r}}$; then 
$\pi_{\Lambda,r}^2=\pi_{\Lambda,r}$, so that $\pi_{\Lambda,r}$ is the orthogonal projection from $V^{\ot (r_c+r)}$ to 
$\C\Lambda\ot V^{\ot r}=\Lambda\ot V^{\ot r}$.  
\begin{proof}[Proof of Theorem \ref{thm:so-inv}]

Since taking $G$-fixed points of  $G$-modules
is an exact functor, we have an isomorphism 
\be\label{eq:surject}
 (\Lambda\otimes  V^{\otimes r} )^G\simeq \pi_{\Lambda, r}\left((V^{\otimes(r_c+ r)} )^G\right).
\ee


The FFT for $G$ enables us to determine the right hand side of \eqref{eq:surject}
by using the harmonicity of $\Lambda$. 

We show first diagrammatically that $(\Lambda\otimes  V^{\otimes r} )^G=0$ if $r<r_c$. 
For this, observe  that $(\Lambda,\Lambda)\pi_{\Lambda,r}\in\CT_{r_c+r}^{r_c+r}$ is represented by the
diagram

\smallskip

\begin{tikzpicture}
\node at (5,3) {$\Lambda$};
\node at (5,4) {$r_c$};
\draw (2,2.5) rectangle (8,3.5);
\draw (3,3.5)--(3,4.5);\draw (7,3.5)--(7,4.5);
\node at (14,2) {$=(\Lambda,\Lambda)\pi_{\Lambda,r}.$};
\node at (10,2) {$r$};
\node at (5,1) {$\Lambda^*$};
\node at (5,0) {$r_c$};
\draw (2,0.5) rectangle (8,1.5);
\draw (3,0.5)--(3,-0.5);\draw (7,0.5)--(7,-0.5);
\draw (8.5,-0.5)--(8.5,4.5);\draw (11.5,-0.5)--(11.5,4.5);

\end{tikzpicture}

\smallskip

Now if $r<r_c$,
it is evident that in any composition of the form $(\Lambda,\Lambda)\pi_{\Lambda,r}\circ D$, where
$D$ is the diagram in Remark \ref{rem:fftdiag},
 there will be an arc of the lower diagram which has both ends joined to $\Lambda^*$. 
By duality, the fact that $\Lambda$ is harmonic implies that for any $\sigma\in\Sym_{r_c}$, we have $\Lambda^*\circ\sigma\circ \check C=0$.
It follows that by harmonicity of $\Lambda$, if $r<r_c$, any such composition is zero, and by Remark \ref{rem:fftdiag} (ii),
it follows that any element of $(\Lambda\ot V^{\ot r})^G$ is zero.

Moreover, it follows by a straightforward manipulation of diagrams, that if $r\geq r_c$, then again applying 
the FFT for $G$ in the form \eqref{eq:fft2}, the right side of \eqref{eq:surject}
is a linear combination of the diagrams $\Lambda\ot [\sigma\circ(\Lambda\ot \check C^{\ot\frac{r-r_c}{2}})]$,
where $\sigma\in\Sym_r$. That is, if $r\geq r_c$, then as elements of $(V^{\ot (r+r_c)})^G\subset\CT_0^{r_c+r}$,
{
\be\label{eq:a}
\pi_{\Lambda,r}\left((V^{\ot (r+r_c)})^G\right)=\Lambda\ot[\C\Sym_r\circ\left(\Lambda\ot (V^{\ot(r-r_c)})^G\right)].
\ee
}
Next observe that evidently
\be\label{eq:b}
\Lambda\otimes (V^{\otimes r})^{G, \det}= (\Lambda\otimes V^{\otimes r})^G=\pi_{\Lambda,r}\left((V^{\ot (r+r_c)})^G\right). 
\ee
 It follows from \eqref{eq:a} and \eqref{eq:b}, as well as the above analysis of the case $r<r_c$, that 
\be\label{eq:c}
(V^{\otimes r})^{G, \det}=
\begin{cases}
0\text{ if $r<r_c$, and}\\ 
 \C\Sym_r\left(\Lambda\otimes (V^{\otimes (r-r_c)})^G\right), \quad \forall r\ge r_c .\\
 \end{cases}
\ee
Applying the FFT  for $G$ \eqref{eq:fft2} to identify $(V^{\ot r-r_c})^G$, we obtain the theorem. 
\end{proof}

\begin{example}\label{ex:1}
Let $e_1,\dots,e_m$ be an orthonormal basis of $V$, and take $\Lambda=e_1\wedge\dots\wedge e_m=
A_m(e_1\ot\dots\ot e_m)$ where $A_m=(m!)\inv\sum_{\sigma\in\Sym_r}\ve(\sigma)\sigma\in B_m^m$. Then  
$\Lambda$ is evidently harmonic, and $(\Lambda,\Lambda)=(m!)\inv\neq 0$.
\end{example}

\begin{corollary}\label{cor:fftso}
Any element $\alpha\in(\CT_i^j)^{\SO(V)}$ is uniquely 
expressible in the form $\alpha=\alpha_1+\alpha_2$, where
$\alpha_1\in B_i^j$ and $\alpha_2$ is obtained from an element of $(V^{\ot r})^{\Or(V),\det}\subset (\CT_0^r)^{\SO(V)}$
by tensoring and composition with elements of $B$, i.e., with Brauer morphisms.
\end{corollary}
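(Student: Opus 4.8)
The plan is to combine the FFT for $\Or(V)$ (\thmref{thm:fft}) with the structural description of $\SO(V)$-invariant tensors that \thmref{thm:so-inv} provides. First recall that for any finite-dimensional $G$-module $M$ we have the decomposition $M^{G'}=M^G\oplus M^{G,\det}$; applying this to $M=\CT_i^j=\Hom_\C(V^{\ot i},V^{\ot j})$, which is itself a $G$-module isomorphic (via the form) to $V^{\ot(i+j)}$, gives
\[
(\CT_i^j)^{\SO(V)}=(\CT_i^j)^{G}\oplus(\CT_i^j)^{G,\det}.
\]
By the FFT for $\Or(V)$ in the form \eqref{eq:fft1}, the first summand is precisely $B_i^j$, the space of Brauer morphisms, so $\alpha_1$ is the projection of $\alpha$ onto $(\CT_i^j)^G$ under this direct sum. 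The uniqueness of the decomposition $\alpha=\alpha_1+\alpha_2$ with $\alpha_1\in B_i^j$ is then immediate, \emph{provided} one checks that the $\det$-component $(\CT_i^j)^{G,\det}$ intersects $B_i^j$ trivially — but this is clear since $B_i^j=(\CT_i^j)^G$ and $(\CT_i^j)^G\cap(\CT_i^j)^{G,\det}=0$ as these are distinct isotypic components for the two characters of $G/G'$.

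The remaining work is to identify $\alpha_2\in(\CT_i^j)^{G,\det}$ with something ``obtained from an element of $(V^{\ot r})^{\Or(V),\det}$ by tensoring and composition with Brauer morphisms.'' Using the identification $(\CT_i^j)^{G,\det}\cong (V^{\ot(i+j)})^{G,\det}$ coming from the form, it suffices to prove the statement for $r$-fold tensor invariants, i.e.\ to show $(V^{\ot r})^{G,\det}$ is spanned by elements of the form (Brauer morphism) applied to (an element of $(V^{\ot r_c})^{G,\det}$ tensored with identities). But this is exactly the content of \eqref{eq:c} together with the FFT for $G$: taking $\Lambda=e_1\wedge\dots\wedge e_m$ as in \exref{ex:1} (which is harmonic with $(\Lambda,\Lambda)\ne0$, so \thmref{thm:so-inv} applies with $r_c=m$), formula \eqref{eq:c} gives $(V^{\ot r})^{G,\det}=\C\Sym_r(\Lambda\ot(V^{\ot(r-m)})^G)$, and $(V^{\ot(r-m)})^G=\C\Sym_{r-m}(\check C^{\ot\frac{r-m}{2}})$ by \eqref{eq:fft2}. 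Since $\Lambda=A_m(e_1\ot\dots\ot e_m)$ lies in $V^{\ot m}$, and $\Sym_r$, $A_m$, $\check C$ are all Brauer morphisms, every spanning element of $(V^{\ot r})^{G,\det}$ is indeed $\Lambda_0:=e_1\ot\dots\ot e_m \in (V^{\ot m})^{\Or(V),\det}$ — wait, more precisely $\Lambda$ itself is already a Brauer morphism applied to $e_1\ot\dots\ot e_m$, but the cleanest statement is that $(V^{\ot r})^{G,\det}$ is obtained from the single element $\Lambda\in(V^{\ot m})^{\Or(V),\det}$ by tensoring with identities and composing with Brauer morphisms $\Sym_r$ and $\check C^{\ot k}$.

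Finally I would translate this back to $\Hom$ spaces. Given $\alpha\in(\CT_i^j)^{\SO(V)}$, write $\alpha=\alpha_1+\alpha_2$ as above; $\alpha_1\in B_i^j$; and $\alpha_2$, viewed in $(V^{\ot(i+j)})^{G,\det}$ under the duality, is by the previous paragraph of the form $\beta\circ(\Lambda\ot\gamma)$ with $\beta,\gamma$ Brauer morphisms and $\Lambda\in(V^{\ot m})^{\Or(V),\det}$; bending strands back down via the cups and caps $\check C,\hat C$ (which are Brauer morphisms) turns this into the asserted description of $\alpha_2$ as an element of $(V^{\ot r})^{\Or(V),\det}$ with $r=m$, acted on by tensoring and composition with morphisms of $B$. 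The one point requiring a little care — and the main (mild) obstacle — is bookkeeping the duality/``bending'' isomorphism $(\CT_i^j)^{G,\det}\cong(V^{\ot(i+j)})^{G,\det}$ compatibly with the Brauer category structure, so that Brauer morphisms on the tensor side correspond to Brauer morphisms on the $\Hom$ side; this is routine given the diagrammatic calculus of \S\ref{ss:brauer} (in particular $\hat C\circ\check C=m$ and the snake relations), but it is where all the actual content of the translation sits.
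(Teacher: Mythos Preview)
Your approach is correct and is essentially what the paper intends: the corollary is stated without a separate proof, as an immediate consequence of the decomposition $M^{G'}=M^G\oplus M^{G,\det}$, the FFT for $\Or(V)$ \eqref{eq:fft1}, and Theorem~\ref{thm:so-inv} with Example~\ref{ex:1}. The only blemish is the mid-argument ``wait'' where you momentarily assert $e_1\ot\dots\ot e_m\in(V^{\ot m})^{\Or(V),\det}$ (which is false---only its antisymmetrisation $\Lambda$ is); you self-correct, but in a final write-up simply take $\Lambda$ as the distinguished element of $(V^{\ot m})^{\Or(V),\det}$ from the start.
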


Example \ref{ex:1} and Theorem \ref{thm:so-inv} give a complete description of the space $(V^{\ot r})^{\Or(V),\det}$,
so that Corollary \ref{cor:fftso} is the FFT for $\SO(V)$.

\begin{remark}
The arguments above may be adapted to prove a corresponding result \cite{LZ8} 
 (where the super Pfaffian enters) for $\osp_{1|2n}$.
\end{remark}

\section{The enhanced Brauer category $\wt\CB(\delta)$.}

Let $R$ be a commutative ring and $\delta\in R$. We fix a positive integer $m$.
With this data, we shall define a tensor category $\CB(\delta)$, 
which contains a quotient of the usual Brauer category $\CB(\delta)$ \cite{LZ5} as a subcategory. We shall see that the 
relations we impose imply a relationship between $\delta$ and $m$, so that for each $m$ there are only finitely
many values of $\delta$ which make our relations consistent. 
Both categories have objects $\N=\{0,1,2,\dots\}$, morphisms which may be described diagrammatically, and an involution $^*$ 
which fixes the objects and reflects morphisms in a horizontal.

\subsection{Definition of $\wt\cB(\delta)$} We have seen that $\cB(\delta)$ may be presented as the category with 
object set $\N$ and morphisms which are generated by the four morphisms $I,U,A$ and $X$ under composition,
tensor product and duality, subject to certain relations, which are described in \cite{LZ5}.
In the definition below, we shall make extensive use of the total anti-symmetriser $\Sigma_r\in \cB_r^r$,
where $\cB=\cB(\delta)$. This is defined by 
\be\label{eq:sigr}
\Sigma_r=\sum_{\pi\in\Sym_r}\ve(\pi)\pi,
\ee 
and is depicted diagramatically as
\medskip

\begin{tikzpicture}
\node at (0,1) {$\Sigma_r=$};
\node at (5,1) {$r$};
\node at (4,2) {$...$};
\node at (6,2) {$...$};
\node at (5,2) {$r$};
\node at (5,0) {$r$};
\node at (4,0) {$...$};
\node at (6,0) {$...$};
\draw (2,0.5) rectangle (8,1.5);
\draw (3,1.5)--(3,2.5);\draw (7,1.5)--(7,2.5);
\draw (3,0.5)--(3,-.5);\draw (7,0.5)--(7,-.5);

\end{tikzpicture}

\begin{definition}\label{def:wbrcat} Let $R$ be a ring, $\delta\in R$ and $m\geq 2$ a positive integer.
The category $\wt \cB(\delta)$ has object set $\N$ and morphisms which are generated by $I,U,A,X$ and one new generator
$\Delta_m\in\wt\cB_0^m$, subject to the following relations, which describe the interaction of the new generator with the
Brauer morphisms.
\begin{enumerate}
\item The relations \cite[Thm. 2.6(2)]{LZ5} for the generators $I,U,A$ and $X$.
\item (Harmonicity) For each positive integer $r$ with $0\leq r\leq m-2$, $(I^{\ot r}\ot A\ot I^{\ot m-r-2})\circ\Delta_m=0$.
\item For each positive integer $r$ with $0\leq r\leq m-2$, $(I^{\ot r}\ot X\ot I^{\ot m-r-2})\circ\Delta_m=-\Delta_m$.
\item $\Delta_m\circ \Delta_m^*=\Sigma_m$.
\end{enumerate}
\end{definition}
The new generator $\Delta_m$ will be depicted diagrammatically (as a morphism from $0$ to $m$) as follows.

\bigskip
\centerline{
\begin{tikzpicture}
\draw (-1,0).. controls (0,-1) .. (1,0);
\draw (-1,0)-- (1,0);
\draw (-.7,0)-- (-.7,1);
\draw (.7,0)-- (.7,1);
\node at (0,.5) {$m$};
\node at (0,-.5) {$m$};
\node at (-.5,.5) {$...$};
\node at (.5,.5) {$...$};
\end{tikzpicture}}

The relations above have suggestive diagrammatical interpretations, which are helpful in performing computations in the category
$\wt\CB(\delta)$. For example, the relation (4) may be depicted diagrammatically as follows.

\bigskip
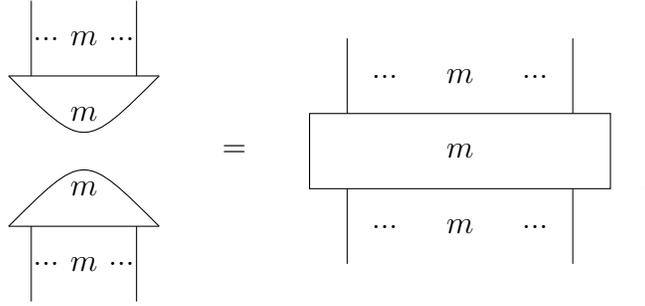
\begin{figure}
\begin{tikzpicture}
\draw (-1,0).. controls (0,-1) .. (1,0);
\draw (-1,0)-- (1,0);
\draw (-.7,0)-- (-.7,1);
\draw (.7,0)-- (.7,1);
\node at (0,.5) {$m$};
\node at (0,-.5) {$m$};
\node at (-.5,.5) {$...$};
\node at (.5,.5) {$...$};

\draw (-1,-2).. controls (0,-1) .. (1,-2);
\draw (-1,-2)-- (1,-2);
\draw (-.7,-2)-- (-.7,-3);
\draw (.7,-2)-- (.7,-3);
\node at (0,-2.5) {$m$};
\node at (0,-1.5) {$m$};
\node at (-.5,.-2.5) {$...$};
\node at (.5,-2.5) {$...$};

\node at (2,-1) {$=$};

\node at (5,-2) {$m$};
\node at (4,-2) {$...$};
\node at (6,-2) {$...$};
\node at (5,-1) {$m$};
\node at (5,0) {$m$};
\node at (4,0) {$...$};
\node at (6,0) {$...$};
\draw (3,-1.5) rectangle (7,-.5);
\draw (3.5,-1.5)--(3.5,-2.5);\draw (6.5,-1.5)--(6.5,-2.5);
\draw (3.5,0.5)--(3.5,-.5);\draw (6.5,0.5)--(6.5,-.5);
\node at (7.5,-1.5) {$.$};
\end{tikzpicture}
\caption{The relation 4.}
\end{figure}

\begin{remark}\label{rem:nonzero}
With our application to invariant theory in mind, we shall assume that the base ring $R$ is an integral domain, and that 
$m!\neq 0$ in $R$. 

It follows from condition (3) of Definition \ref{def:wbrcat} that $\Sigma_m\Delta_m=m!\Delta_m$, and hence by the above assumptions, 
that, if $\Sigma_m=0$, then $\Delta_m=0$. If $\Sigma_m=0$, the category $\wt\CB(\delta)$ therefore 
is just a quotient category of $\CB(\delta)$.

To avoid this degeneracy, we shall therefore assume that $\Sigma_m\neq 0$.
\end{remark}

\subsection{Some computations in the category $\wt\CB(\delta)$} In this subsection, we shall perform the computation which 
we shall later use to show that the kernel of the (surjective) map from $\wt\CB(m)_i^j$ to $(\CT_i^j)^{\SO_m}$ is zero.
We shall argue diagramatically, and recall the following result from \cite{LZ5}.

\begin{lemma}\label{lem:red}\cite[Lemma 2.1(1) and (2)]{LZ5} For all $r\geq 1$ we have the following relations 
in $\CB(\delta)$, and hence {\it a fortiori} in $\wt\CB(\delta)$.
\begin{enumerate}
\item
\[
\begin{picture}(80, 60)(0, -30)
\put(0, 10){\line(1, 0){60}}
\put(0, -10){\line(1, 0){60}}
\put(0, 10){\line(0, -1){20}}
\put(60, 10){\line(0, -1){20}}
\put(25, -3){$r$}

\put(10, 10){\line(0, 1){15}}
\put(23, 15){$\cdots$}
\put(50, 10){\line(0, 1){15}}

\put(10, -10){\line(0, -1){15}}
\put(23, -20){$\cdots$}
\put(50, -10){\line(0, -1){15}}

\put(70, -2){$=$}
\end{picture}
\begin{picture}(80, 60)(-10, -30)
\put(0, 10){\line(1, 0){50}}
\put(0, -10){\line(1, 0){50}}
\put(0, 10){\line(0, -1){20}}
\put(50, 10){\line(0, -1){20}}
\put(15, -3){$r-1$}

\put(10, 10){\line(0, 1){15}}
\put(18, 15){$\cdots$}
\put(40, 10){\line(0, 1){15}}

\put(10, -10){\line(0, -1){15}}
\put(18, -20){$\cdots$}
\put(40, -10){\line(0, -1){15}}

\put(60, -25){\line(0, 1){50}}

\put(70, -3){$-$}

\end{picture}
\begin{picture}(80, 60)(-70, -30)

\put(-60, -3){$ (r-2)!^{-1}$}

\put(0, 10){\line(1, 0){40}}
\put(0, 25){\line(1, 0){40}}
\put(0, 10){\line(0, 1){15}}
\put(40, 10){\line(0, 1){15}}
\put(12, 15){\tiny$r-1$}

\put(5, 10){\line(0, -1){20}}
\put(25, 10){\line(0, -1){20}}
\put(8, -3){$\cdots$}

\qbezier(35,10)(45, 5)(50, -35)
\qbezier(35,-10)(45, -5)(50, 35)

\put(0, -10){\line(1, 0){40}}
\put(0, -25){\line(1, 0){40}}
\put(0, -10){\line(0, -1){15}}
\put(40, -10){\line(0, -1){15}}
\put(12, -20){\tiny$r-1$}

\put(5, 25){\line(0, 1){10}}
\put(35, 25){\line(0, 1){10}}
\put(14, 28){$\cdots$}

\put(5, -25){\line(0, -1){10}}
\put(35, -25){\line(0, -1){10}}
\put(14, -33){$\cdots$}

\put(55, -35){.}
\end{picture}
\]

\item
\[
\begin{picture}(80, 60)(0, -30)
\put(0, 10){\line(1, 0){60}}
\put(0, -10){\line(1, 0){60}}
\put(0, 10){\line(0, -1){20}}
\put(60, 10){\line(0, -1){20}}
\put(25, -3){$r$}

\put(10, 10){\line(0, 1){15}}
\put(20, 15){$\cdots$}
\put(40, 10){\line(0, 1){15}}

\qbezier(50, -10)(60, -40)(65, 0)
\qbezier(50, 10)(60, 40)(65, 0)

\put(10, -10){\line(0, -1){15}}
\put(20, -20){$\cdots$}
\put(40, -10){\line(0, -1){15}}

\put(75, -3){$=$}
\end{picture}
\begin{picture}(80, 60)(-90, -30)
\put(-80, -5){$-(r-1-\delta)$}
\put(0, 10){\line(1, 0){60}}
\put(0, -10){\line(1, 0){60}}
\put(0, 10){\line(0, -1){20}}
\put(60, 10){\line(0, -1){20}}
\put(20, -3){$r-1$}

\put(10, 10){\line(0, 1){15}}
\put(23, 15){$\cdots$}
\put(50, 10){\line(0, 1){15}}

\put(10, -10){\line(0, -1){15}}
\put(23, -20){$\cdots$}
\put(50, -10){\line(0, -1){15}}

\put(60, -25){.}
\end{picture}
\]

\end{enumerate}

\end{lemma}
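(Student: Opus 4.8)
The plan is to treat the two relations separately: relation (1) is really an identity in the group algebra $\C\Sym_r\subseteq\cB_r^r$ (where $s_i$ is identified with $I^{\ot i-1}\ot X\ot I^{\ot r-i-1}$), and relation (2) is deduced from it by a short diagrammatic manipulation. Written algebraically, relation (1) asserts
\[
\Sigma_r=\Sigma_{r-1}\ot I-\tfrac{1}{(r-2)!}\,(\Sigma_{r-1}\ot I)(I^{\ot r-2}\ot X)(\Sigma_{r-1}\ot I),
\]
and relation (2) asserts that closing up the last strand of $\Sigma_r$ yields $-(r-1-\delta)\,\Sigma_{r-1}$.

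For relation (1), I would use the decomposition of $\Sym_r$ into left cosets of $\Sym_{r-1}=\mathrm{Stab}(r)$, with representatives $g_r=\mathrm{id}$ and $g_i=s_{r-1}s_{r-2}\cdots s_i$ for $i<r$, so that $\ve(g_i)=(-1)^{r-i}$. Combined with the elementary facts $\Sigma_{r-1}\tau=\ve(\tau)\Sigma_{r-1}$ for $\tau\in\Sym_{r-1}$ and $\Sigma_{r-1}\Sigma_{r-2}=(r-2)!\,\Sigma_{r-1}$, this gives
\[
\Sigma_r=\sum_{i=1}^r\ve(g_i)\,\Sigma_{r-1}g_i=\Sigma_{r-1}\ot I-\Sigma_{r-1}(I^{\ot r-2}\ot X)\,w,\qquad w:=\sum_{i=1}^{r-1}(-1)^{r-1-i}h_i,
\]
where $h_i:=s_{r-2}\cdots s_i$ (with $h_{r-1}=\mathrm{id}$), so that $g_i=s_{r-1}h_i$ for $i<r$. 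The key observation is that $\{h_i\}_{i=1}^{r-1}$ is precisely the analogous coset representative system for $\Sym_{r-2}\subseteq\Sym_{r-1}$, whence $\Sigma_{r-1}=\Sigma_{r-2}\,w$. Substituting this for the inner $\Sigma_{r-1}$, commuting $\Sigma_{r-2}$ (which acts on strands $1,\dots,r-2$) past $I^{\ot r-2}\ot X$, and using $\Sigma_{r-1}\Sigma_{r-2}=(r-2)!\,\Sigma_{r-1}$ once more, one obtains $\Sigma_{r-1}(I^{\ot r-2}\ot X)\Sigma_{r-1}=(r-2)!\,\Sigma_{r-1}(I^{\ot r-2}\ot X)\,w$, which is relation (1).

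Relation (2) then follows by closing the last strand throughout relation (1). Closing the last strand of $\Sigma_{r-1}\ot I$ replaces that free strand by a loop and so contributes $\delta\,\Sigma_{r-1}$, by the Brauer relation $AU=\delta$. Closing the last strand of $(\Sigma_{r-1}\ot I)(I^{\ot r-2}\ot X)(\Sigma_{r-1}\ot I)$: the closure arc together with the single crossing $X$ straightens out under a planar isotopy, producing no closed loop, so one is left with the composite $(\Sigma_{r-1}\ot I)\circ(\Sigma_{r-1}\ot I)$ read on $r-1$ strands, namely $\Sigma_{r-1}^2=(r-1)!\,\Sigma_{r-1}$. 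Hence the closure of $\Sigma_r$ equals $\delta\Sigma_{r-1}-\tfrac{(r-1)!}{(r-2)!}\Sigma_{r-1}=(\delta-r+1)\Sigma_{r-1}=-(r-1-\delta)\Sigma_{r-1}$, as claimed. The only delicate point in the whole argument is the sign- and factorial-bookkeeping in the coset computation for relation (1) — in particular recognising $\{h_i\}$ as the lower-rank coset system, so that $\Sigma_{r-1}=\Sigma_{r-2}w$; the passage from (1) to (2) is routine diagrammatics, and both relations are easily sanity-checked by hand for small $r$.
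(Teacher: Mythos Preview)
Your argument is correct. The coset decomposition for relation (1) is sound: writing $\Sigma_r=\sum_{i=1}^r\ve(g_i)(\Sigma_{r-1}\ot I)g_i$ with $g_i=s_{r-1}h_i$ for $i<r$, recognising that the $h_i$ are themselves coset representatives for $\Sym_{r-2}\subset\Sym_{r-1}$ so that $\Sigma_{r-1}=\Sigma_{r-2}w$, and then commuting $\Sigma_{r-2}$ past $s_{r-1}$ (they act on disjoint strands) and absorbing it into the outer $\Sigma_{r-1}$ via $\Sigma_{r-1}\Sigma_{r-2}=(r-2)!\,\Sigma_{r-1}$, gives exactly the stated identity. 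Your deduction of relation (2) from (1) is also correct: the closure of the free strand in $\Sigma_{r-1}\ot I$ contributes a loop $=\delta$, while in the second term the closure of the $r$-th strand through the crossing $X$ straightens to the identity on one strand (the Brauer ``Reidemeister I'' move), leaving $\Sigma_{r-1}^2=(r-1)!\,\Sigma_{r-1}$; the arithmetic then gives $-(r-1-\delta)\Sigma_{r-1}$.

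As for comparison with the paper: there is nothing to compare against here. The lemma is quoted from \cite[Lemma~2.1]{LZ5} and the present paper gives no proof of it, so your self-contained argument simply supplies what the paper omits by citation. One trivial caveat: the case $r=1$ of relation (1) is degenerate (the factor $(r-2)!^{-1}$ is not defined), but this is an artefact of the statement rather than of your proof, and relation (2) for $r=1$ is immediate from $AU=\delta$.
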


The next result shows that there are constraints upon the parameter $\delta$ arising from consistency questions in the category $\wt\CB(\delta)$.

\begin{lemma}\label{lem:dreln}
With the assumptions of Remark \ref{rem:nonzero}, it follows that $\delta$ satisfies the polynomial equation 
\be\label{eq:deltarel}
\delta(\delta-1)\dots(\delta-(m-1))=m!.
\ee

Furthermore, $\Delta_m^*\Delta_m=m!\in R$.
\end{lemma}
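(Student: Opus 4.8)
The plan is to evaluate the scalar $\Delta_m^*\circ\Delta_m\in\wt\CB_0^0$ in two different ways. On the one hand, using only the Brauer relations together with \lemref{lem:red}, one computes it to be $\delta(\delta-1)\cdots(\delta-(m-1))$; on the other hand, using the two new relations (3) and (4) of Definition~\ref{def:wbrcat}, one computes it to be $m!$. Equating the two evaluations gives \eqref{eq:deltarel}, and either evaluation gives the asserted value $\Delta_m^*\Delta_m=m!$.

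For the first evaluation, apply the categorical trace $\tr\colon\wt\CB_m^m\to\wt\CB_0^0=R$ (diagrammatically, closing up all $m$ strands using the cups and caps $U,A$) to relation (4), $\Delta_m\circ\Delta_m^*=\Sigma_m$. Since the trace is cyclic and $\Delta_m^*\circ\Delta_m$ is already an endomorphism of the unit object, the left side becomes $\tr(\Delta_m\circ\Delta_m^*)=\tr(\Delta_m^*\circ\Delta_m)=\Delta_m^*\circ\Delta_m$; in particular this morphism lies in $R$. Thus $\Delta_m^*\circ\Delta_m=\tr(\Sigma_m)$, and it remains to evaluate $\tr(\Sigma_m)$. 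This I would do by peeling off strands one at a time: \lemref{lem:red}(2) says that the partial trace of $\Sigma_r$ over its last strand equals $(\delta-(r-1))\,\Sigma_{r-1}$, so iterating from $r=m$ down to $r=2$ and using $\tr(\Sigma_1)=\tr(I)=\delta$ gives
\[
\Delta_m^*\circ\Delta_m=\tr(\Sigma_m)=\delta(\delta-1)(\delta-2)\cdots(\delta-(m-1)).
\]

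For the second evaluation, relation (3) of Definition~\ref{def:wbrcat} says precisely that each of the $m-1$ elementary transpositions $I^{\ot r}\ot X\ot I^{\ot m-r-2}$ ($0\le r\le m-2$) acts on $\Delta_m$ by $-1$; writing an arbitrary $\pi\in\Sym_m$ as a word in these transpositions gives $\pi\circ\Delta_m=\ve(\pi)\,\Delta_m$, whence $\Sigma_m\circ\Delta_m=\sum_{\pi\in\Sym_m}\ve(\pi)^2\,\Delta_m=m!\,\Delta_m$. Combining this with relation (4),
\[
m!\,\Delta_m=\Sigma_m\circ\Delta_m=(\Delta_m\circ\Delta_m^*)\circ\Delta_m=\Delta_m\circ(\Delta_m^*\circ\Delta_m),
\]
so that $(\Delta_m^*\circ\Delta_m-m!)\,\Delta_m=0$. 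Now $\Delta_m\ne 0$, since otherwise $\Sigma_m=\Delta_m\circ\Delta_m^*=0$, contrary to the standing assumption of \remref{rem:nonzero}; and since $R$ is an integral domain, invoking the same non-vanishing principle for $\Hom$-spaces over $R$ that is used in \remref{rem:nonzero}, the scalar $\Delta_m^*\circ\Delta_m-m!$ must vanish. Hence $\Delta_m^*\Delta_m=m!$, and comparison with the first evaluation yields \eqref{eq:deltarel}. (One can also argue that composing on the left with $\Delta_m^*$ gives $c(c-m!)=0$ with $c=\Delta_m^*\Delta_m$, so $c\in\{0,m!\}$ by the domain hypothesis, and $c=0$ would force $m!\,\Delta_m=0$ and hence $m!\,\Sigma_m=m!\,\Delta_m\circ\Delta_m^*=0$, contradicting $\Sigma_m\ne 0$.)

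I expect the first evaluation to be the main point: one must confirm that $\wt\CB_0^0=R$, that closing up relation (4) really produces $\tr(\Sigma_m)$, and then carry out the inductive partial-trace computation of \lemref{lem:red}(2) with the signs tracked correctly — this is the familiar computation of the ``dimension'' of the total antisymmetriser in the Brauer category, but it must be done carefully. The only other delicate point is the final cancellation $(\Delta_m^*\circ\Delta_m-m!)\,\Delta_m=0\Rightarrow\Delta_m^*\circ\Delta_m=m!$, which rests on $\Delta_m\ne 0$ together with the integral-domain hypothesis, exactly as in \remref{rem:nonzero}.
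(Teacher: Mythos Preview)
Your proof is correct and follows essentially the same two-evaluation strategy as the paper: both compute $\Delta_m^*\Delta_m$ as $\tr(\Sigma_m)=\delta(\delta-1)\cdots(\delta-(m-1))$ via repeated use of \lemref{lem:red}(2), and separately as $m!$ by combining relation~(4) with the antisymmetry of $\Delta_m$. The only cosmetic difference is that the paper obtains the equation $(c-m!)\Sigma_m=0$ (from $\Sigma_m^2=m!\Sigma_m$) rather than your $(c-m!)\Delta_m=0$, and then ``compares coefficients'' of $\Sigma_m$; the torsion-freeness step you flag is exactly the one the paper also relies on in \remref{rem:nonzero}.
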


\begin{proof}
We shall compute $\Delta_m^*\Delta_m$ in two different ways. First,
observe that by inspection of the relevant diagrams, it is evident that
 $(\Delta_m\Delta_m^*)^2=(\Delta_m^*\Delta_m) (\Delta_m\Delta_m^*)$, where $\Delta_m^*\Delta_m$ is a scalar.
Thus, applying Relation (4) twice,
we see that $(\Delta_m^*\Delta_m)\Sigma_m=\Sigma_m^2=m!\Sigma_m\neq 0$.
Comparing coefficients of the non-zero element $\Sigma_m$, it follows that
\be\label{eq:dstd}
\Delta_m^*\Delta_m=m!.
\ee

Next, note that
we have the relation (i) depicted in Fig. 2.

\vspace{-2.8cm}
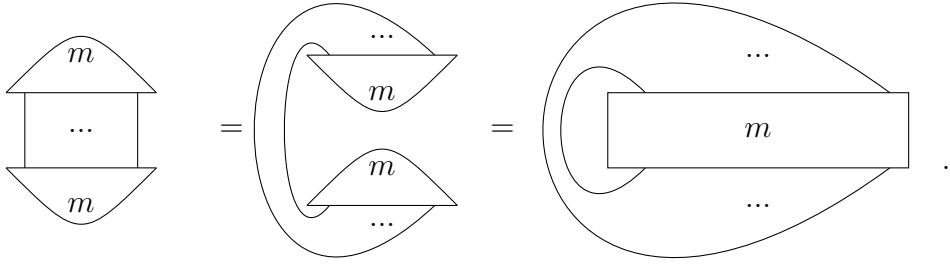
\begin{figure}[h]
\begin{tikzpicture}
\draw (-5,-.5).. controls (-4,0.5) .. (-3,-.5);
\draw (-5,-.5)-- (-3,-.5);

\draw (-5,-1.5).. controls (-4,-2.5) .. (-3,-1.5);
\draw (-5,-1.5)-- (-3,-1.5);

\node at (-4,0) {$m$};
\node at (-4,-2) {$m$};
\node at (-4,-1) {$...$};
\node at (-2,-1) {$=$};

\draw (-4.75,-.5)-- (-4.75,-1.5);
\draw (-3.25,-.5)-- (-3.25,-1.5);

\draw (-1,0).. controls (0,-1) .. (1,0);
\draw (-1,0)-- (1,0);
\draw (-.7,0).. controls (-1.5,1) and (-1.5,-3).. (-.7,-2);
\node at (0,-.5) {$m$};

\draw (-1,-2).. controls (0,-1) .. (1,-2);
\draw (-1,-2)-- (1,-2);
\draw (.7,0).. controls (-2.5,3) and (-2.5,-5).. (.7,-2);
\node at (0,-1.5) {$m$};
\node at (0,.25) {$...$};
\node at (0,-2.25) {$...$};

\node at (1.6,-1) {$=$};

\node at (5,-1) {$m$};
\node at (5,0) {$...$};
\node at (5,-2) {$...$};

\draw (3,-1.5) rectangle (7,-.5);

\draw (3.5,-1.5).. controls (2,-3) and (2,1).. (3.5,-.5);
\draw (6.75,-1.5).. controls (0.6,-6) and (0.6,4).. (6.75,-.5);
\node at (7.5,-1.5) {$.$};
\end{tikzpicture}
\vspace{-3cm}
\caption{Relation (i).}
\end{figure}

Now the right side of relation (i) is, by $m$ applications of Lemma \ref{lem:red}, equal to $(\delta-(m-1))(\delta-(m-2))\dots(\delta-1)\delta$,
while the left side is just $\Delta_m^*\Delta_m$. The result is now clear from \eqref{eq:dstd}.
\end{proof}

\begin{theorem}\label{thm:comp} The assumptions of Remark \ref{rem:nonzero} remain in force.
\begin{enumerate}
\item In the category $\wt\CB(\delta)$ we have the equality of morphisms
$\Sigma_{m+1}=f_m(\delta) \Sigma_m\ot I$. Here $m$ is the positive integer occurring in the definition of $\wt\CB(\delta)$
and $f_m$ is the polynomial in $\delta$ given by $f_m(\delta)=(\delta-(m-1))(\delta-(m-2))\dots(\delta-1)-(m-1)!$.
\item We have $\Sigma_{m+1}=0$.
\item We have $\delta=m$.
\end{enumerate}
\end{theorem}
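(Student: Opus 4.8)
The plan is to prove the three parts of Theorem~\ref{thm:comp} in the order given, using the diagrammatic calculus and Lemma~\ref{lem:red} as the main computational engine.

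\medskip

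\textbf{Part (1).} I would start from the total antisymmetriser $\Sigma_{m+1}\in\wt\cB_{m+1}^{m+1}$ and use the recursive reduction of Lemma~\ref{lem:red}(1), which expresses $\Sigma_r$ in terms of $\Sigma_{r-1}\ot I$, a single extra through-strand, and a "cap-cup" term carrying a coefficient $(r-2)!^{-1}$. Applying this with $r=m+1$ writes $\Sigma_{m+1}$ as $\Sigma_m\ot I$ minus $(m-1)!^{-1}$ times a diagram in which $\Sigma_m$ is pre- and post-composed with a cup and a cap on the last strand, i.e.\ a term of the form $(\text{stuff})\circ(\Sigma_m\text{-with-a-turnback})$. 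The key point is that a turnback applied to $\Sigma_m$ is not zero in $\wt\cB(\delta)$ (unlike in $\cB$): rather, closing up the last strand of $\Sigma_m$ against a cup/cap produces, via $m-1$ iterations of Lemma~\ref{lem:red}(2), the scalar $(\delta-(m-1))(\delta-(m-2))\dots(\delta-1)$ times $\Sigma_{m-1}\ot I$ (compare the computation of $\Delta_m^*\Delta_m$ in Lemma~\ref{lem:dreln}). Re-tensoring with the identity strand and collecting, one gets
\[
\Sigma_{m+1}=\Sigma_m\ot I-(m-1)!^{-1}\,(m-1)!\,\bigl[(\delta-(m-1))\cdots(\delta-1)-(m-1)!\bigr]\,\Sigma_m\ot I,
\]
wait --- the bookkeeping of the $(m-1)!$ factors is exactly the routine part; after collecting one obtains $\Sigma_{m+1}=f_m(\delta)\,\Sigma_m\ot I$ with $f_m(\delta)=(\delta-(m-1))\cdots(\delta-1)-(m-1)!$. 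The honest statement: the diagrammatic reduction of Lemma~\ref{lem:red}, applied to the last strand of $\Sigma_{m+1}$ and then iterated on the turnback, yields this identity after tracking the factorial coefficients.

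\medskip

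\textbf{Part (2).} Here I would use relation~(4) and relation~(3) of Definition~\ref{def:wbrcat} together with the nondegeneracy assumptions of Remark~\ref{rem:nonzero}. Relation~(4) gives $\Delta_m\circ\Delta_m^*=\Sigma_m$; tensoring with $I$ and using Part~(1), $\Sigma_{m+1}=f_m(\delta)\,\Sigma_m\ot I=f_m(\delta)\,(\Delta_m\ot I)\circ(\Delta_m^*\ot I)$. On the other hand, I claim $\Sigma_{m+1}\circ(\Delta_m\ot I)=0$: the antisymmetriser on $m+1$ strands, precomposed with $\Delta_m$ on the first $m$ strands, forces a contraction (a cap) landing on two of the $m$ legs of $\Delta_m$, which vanishes by the harmonicity relation~(2); more precisely one expands $\Sigma_{m+1}$ over $\Sym_{m+1}$, splits by whether the $(m+1)$-st strand is fixed, and in the non-fixed cosets a transposition $(i,m+1)$ combined with antisymmetry of $\Delta_m$ in legs $i$ and $m+1$ --- no, rather: $\Sigma_{m+1}\circ(\Sigma_m\ot I)=(m!)\,\Sigma_{m+1}$ but also $\Sigma_{m+1}=\Sigma_{m+1}\circ(\Sigma_m\ot I)/m!$, and one shows $\Sigma_{m+1}$ kills $\Delta_m\ot I$ because antisymmetrising $m+1$ legs while $m$ of them already come from the totally antisymmetric, harmonic $\Delta_m$ produces a cap on a pair of $\Delta_m$-legs (by a pigeonhole/Laplace-expansion argument), hence $0$ by relation~(2). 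Therefore $\Sigma_{m+1}\circ(\Delta_m\ot I)=0$. Composing $\Sigma_{m+1}=f_m(\delta)(\Delta_m\ot I)(\Delta_m^*\ot I)$ on the right with $\Delta_m\ot I$ and using Lemma~\ref{lem:dreln} ($\Delta_m^*\Delta_m=m!$) gives $0=f_m(\delta)\,m!\,(\Delta_m\ot I)$. Since $\Delta_m\neq 0$ (as $\Sigma_m\neq 0$, by Remark~\ref{rem:nonzero}) and $m!\neq 0$ in $R$, we get $f_m(\delta)=0$, whence $\Sigma_{m+1}=f_m(\delta)\,\Sigma_m\ot I=0$ by Part~(1).

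\medskip

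\textbf{Part (3).} Now combine the two polynomial identities for $\delta$: Lemma~\ref{lem:dreln} gives $\delta(\delta-1)\cdots(\delta-(m-1))=m!$, and Part~(2) gives $f_m(\delta)=(\delta-(m-1))\cdots(\delta-1)-(m-1)!=0$, i.e.\ $(\delta-1)(\delta-2)\cdots(\delta-(m-1))=(m-1)!$. Dividing (legitimately, since $R$ is a domain and one checks $\delta\neq 0$ from the first relation) the first by the second yields $\delta=m$. I would phrase the division carefully: from $\delta\cdot(m-1)!=\delta\cdot\bigl[(\delta-1)\cdots(\delta-(m-1))\bigr]=\delta(\delta-1)\cdots(\delta-(m-1))=m!=m\cdot(m-1)!$, and since $(m-1)!$ is a nonzero element of the domain $R$, we conclude $\delta=m$.

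\medskip

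\textbf{Main obstacle.} The delicate step is Part~(2): establishing rigorously, in the abstract category, that $\Sigma_{m+1}\circ(\Delta_m\ot I)=0$, i.e.\ that antisymmetrising one more strand past the $m$ harmonic antisymmetric legs of $\Delta_m$ produces a turnback on two legs of $\Delta_m$ that is annihilated by relation~(2). This is a genuinely diagrammatic pigeonhole argument (a Laplace-type expansion of $\Sigma_{m+1}$ along the last column) and it must be carried out using only the defining relations (1)--(4), without appealing to the functor to $\CT$ --- that is the whole point of the theorem. Everything else is either a direct invocation of Lemma~\ref{lem:red} (Part~(1)) or elementary algebra in the domain $R$ (Part~(3)).
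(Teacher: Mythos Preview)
Your plan has two genuine gaps; Part (3) is fine and matches the paper.

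\textbf{Part (1).} The decisive step you omit is the use of relation~(4), $\Delta_m\circ\Delta_m^*=\Sigma_m$. After applying Lemma~\ref{lem:red}(1) with $r=m+1$, the second summand is a diagram with \emph{two} separate $\Sigma_m$ boxes joined by $m-1$ strands, not a single $\Sigma_m$ with a turnback; iterating Lemma~\ref{lem:red}(2) on a single box, as you propose, does not simplify this term. Indeed, in the ordinary Brauer category $\cB(\delta)$ the identity $\Sigma_{m+1}=f_m(\delta)\,\Sigma_m\ot I$ is \emph{false} (the two sides are linearly independent), so some relation specific to $\wt\cB(\delta)$ must enter, and the only one that touches a Brauer diagram like $\Sigma_m$ is relation~(4). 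The paper substitutes $\Sigma_m=\Delta_m\Delta_m^*$ into each of the two boxes; the inner portion becomes $\Delta_m^*$ and $\Delta_m$ joined by $m-1$ strands with their last legs crossing out, a morphism in $\wt\cB_1^1$. Bending this (via the isomorphisms $\bU,\bA$) into $\wt\cB_0^2$ and then applying $m-1$ caps reduces it, by $m-1$ applications of Lemma~\ref{lem:red}(2), to $(\delta-(m-1))\cdots(\delta-1)\cdot I$. Plugging back gives the claimed formula. Your parenthetical ``compare the computation of $\Delta_m^*\Delta_m$'' is on the right track but is not the argument.

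\textbf{Part (2).} Your proposed test, showing $\Sigma_{m+1}\circ(\Delta_m\ot I)=0$ directly from relations (2) and (3), does not go through: $\Sigma_{m+1}$ contains no caps, so harmonicity never applies, and a Laplace expansion along the last strand merely produces a signed sum of the $m+1$ distinct insertions of the extra strand into $\Delta_m$, with no further cancellation available from the defining relations. The paper's route is much simpler: compose both sides of Part~(1) on the bottom with $I^{\ot(m-1)}\ot U$. The left side vanishes because $\Sigma_{m+1}$ precomposed with a cup on two adjacent strands is zero (antisymmetry versus symmetry). The right side is $f_m(\delta)$ times $(\Sigma_m\ot I)\circ(I^{\ot(m-1)}\ot U)$, which is the image of $\Sigma_m$ under one of the bending isomorphisms $\bU$ of \cite[Cor.~2.8]{LZ5}, hence nonzero by Remark~\ref{rem:nonzero}. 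This forces $f_m(\delta)=0$ in the domain $R$, and then $\Sigma_{m+1}=0$ by Part~(1).
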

\begin{proof} 
We begin by proving (1).
In this proof we shall make liberal use, both explicit and implicit, 
of the mutually inverse isomorphisms $\bU_p^q:\wt\CB_{p+q}^r\to \wt\CB_p^{r+q}$
and $\bA_q^r:\wt\CB_p^{r+q}\to\wt\CB_{p+q}^r$ defined in \cite[Cor. 2.8]{LZ5}.
Note that these isomorphisms involve only operations (tensor product and composition) 
with the Brauer morphisms in $\wt\CB(\delta)$.

The relation (1) of Lemma \ref{lem:red} yields in our situation
\[
\begin{picture}(80, 60)(0, -30)
\put(0, 10){\line(1, 0){60}}
\put(0, -10){\line(1, 0){60}}
\put(0, 10){\line(0, -1){20}}
\put(60, 10){\line(0, -1){20}}
\put(20, -3){$m+1$}

\put(10, 10){\line(0, 1){15}}
\put(23, 15){$\cdots$}
\put(50, 10){\line(0, 1){15}}

\put(10, -10){\line(0, -1){15}}
\put(23, -20){$\cdots$}
\put(50, -10){\line(0, -1){15}}

\put(70, -2){$=$}
\end{picture}
\begin{picture}(80, 60)(-10, -30)
\put(0, 10){\line(1, 0){50}}
\put(0, -10){\line(1, 0){50}}
\put(0, 10){\line(0, -1){20}}
\put(50, 10){\line(0, -1){20}}
\put(15, -3){$m$}

\put(10, 10){\line(0, 1){15}}
\put(18, 15){$\cdots$}
\put(40, 10){\line(0, 1){15}}

\put(10, -10){\line(0, -1){15}}
\put(18, -20){$\cdots$}
\put(40, -10){\line(0, -1){15}}

\put(60, -25){\line(0, 1){50}}

\put(70, -3){$-$}

\end{picture}
\begin{picture}(80, 60)(-70, -30)

\put(-60, -3){$ (m-1)!^{-1}$}

\put(0, 10){\line(1, 0){40}}
\put(0, 25){\line(1, 0){40}}
\put(0, 10){\line(0, 1){15}}
\put(40, 10){\line(0, 1){15}}
\put(12, 15){\tiny$m$}

\put(5, 10){\line(0, -1){20}}
\put(25, 10){\line(0, -1){20}}
\put(8, -3){$\cdots$}

\qbezier(35,10)(45, 5)(50, -35)
\qbezier(35,-10)(45, -5)(50, 35)

\put(0, -10){\line(1, 0){40}}
\put(0, -25){\line(1, 0){40}}
\put(0, -10){\line(0, -1){15}}
\put(40, -10){\line(0, -1){15}}
\put(12, -20){\tiny$m$}

\put(5, 25){\line(0, 1){10}}
\put(35, 25){\line(0, 1){10}}
\put(14, 28){$\cdots$}

\put(5, -25){\line(0, -1){10}}
\put(35, -25){\line(0, -1){10}}
\put(14, -33){$\cdots$}

\put(55, -35){.}
\end{picture}
\]

We now replace each of the two rectangles in the second summand on the right side above by the left side of Figure 1.
A little manipulation then shows that the result will follow if we prove the relation (a) in Fig. 3.

\bigskip
\begin{figure}[h]
\begin{tikzpicture}
\draw (1,1).. controls (3,3) .. (5,1);
\draw (1,1)--(5,1);

\draw (1,-1).. controls (3,-3) .. (5,-1);
\draw (1,-1)--(5,-1);

\draw (1.25,-1)--(1.25,1);
\draw (4.3,-1)--(4.3,1);

\draw (4.75,1).. controls (5.5,0) .. (6.5,-2.5);
\draw (4.75,-1).. controls (5.5,0) .. (6.5,2.5);

\node at (3,1.5) {$m$};
\node at (3,-1.5) {$m$};
\node at (2.75,0) {$m-1$};

\node at (8,0) {$=$};
\node at (11,0) {$(\delta-(m-1))\dots(\delta-1)$};

\draw (14,-2.5)--(14,2.5);

\end{tikzpicture}
\caption{Relation (a)}
\end{figure}
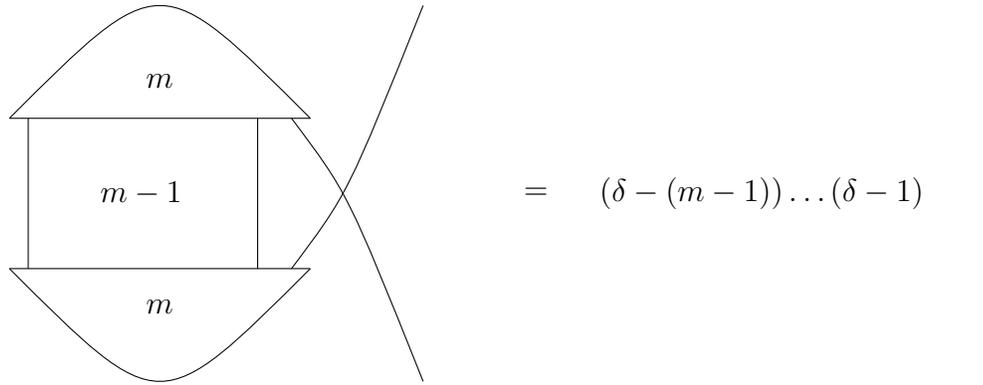

Next, observe that by rotating the top half of the left side of Relation (a) in Fig. 3 anticlockwise by 
$\frac{\pi}{2}$ and applying the isomorphism $\bU_0^1$
from $\CB_1^1$ to $\CB_0^2$, the relation (a) is equivalent to Relation (b) in Fig. 4.

\bigskip
\begin{figure}[h]
\begin{tikzpicture}
\draw (0,0).. controls (1,-2) .. (2,0);
\draw (0,0)--(2,0);

\draw (3,0).. controls (4,-2) .. (5,0);
\draw (3,0)--(5,0);

\draw (0.25,0)--(0.25,2);
\draw (4.75,0)--(4.75,2);

\draw (1.75,0).. controls (2.5,1.5) .. (3.25,0);
\draw (0.4,0).. controls (2.5,2.5) .. (4.6,0);

\node at (1,-1) {$m$};
\node at (4,-1) {$m$};
\node at (2.5,1.3) {$...m-1...$};

\node at (6,0) {$=$};
\node at (8.5,0) {$(\delta-(m-1))\dots(\delta-1)$};

\draw (11,2).. controls (11,-2) and (12.5,-2).. (12.5,2);

\end{tikzpicture}
\caption{Relation (b)}
\end{figure}
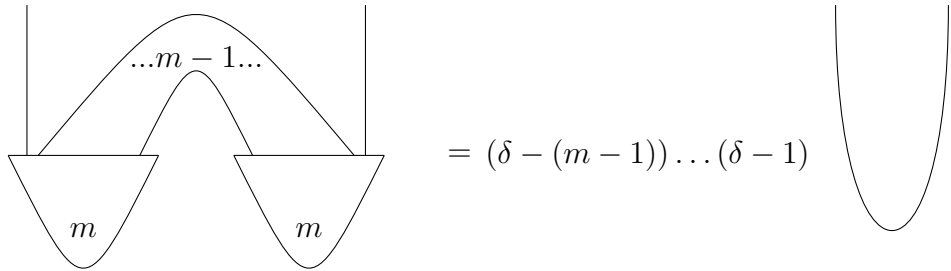

Now to prove Relation (b), observe first that applying the isomorphism $\bU_0^m$ to both sides of
the relation 4 as shown in Fig. 1, we obtain the relation (5) in Fig. 4.

\bigskip
\begin{figure}[h]
\begin{tikzpicture}
\draw (0,0).. controls (1,-2) .. (2,0);
\draw (0,0)--(2,0);

\draw (3,0).. controls (4,-2) .. (5,0);
\draw (3,0)--(5,0);

\draw (0.25,0)--(0.25,2);
\draw (4.75,0)--(4.75,2);

\draw (1.75,0) -- (1.75,2);
\draw (3.25,0) -- (3.25,2);

\node at (1,-1) {$m$};
\node at (4,-1) {$m$};

\node at (6,0) {$=$};

\draw (7,-0.5) rectangle (9,0.5);
\draw (7.25,.5) -- (7.25,2);
\draw (8.75,.5) -- (8.75,2);

\draw (8.75,-.5).. controls (9,-1.5) and (9.75,-1.5).. (10,2);
\draw (7.25,-.5).. controls (8,-2) and (11,-2).. (12,2);

\node at (1,1) {$...$};
\node at (4,1) {$...$};
\node at (8,0) {$m$};
\node at (10.7,1) {$...$};
\node at (8,1) {$...$};

\end{tikzpicture}
\caption{Relation (5)}
\end{figure}
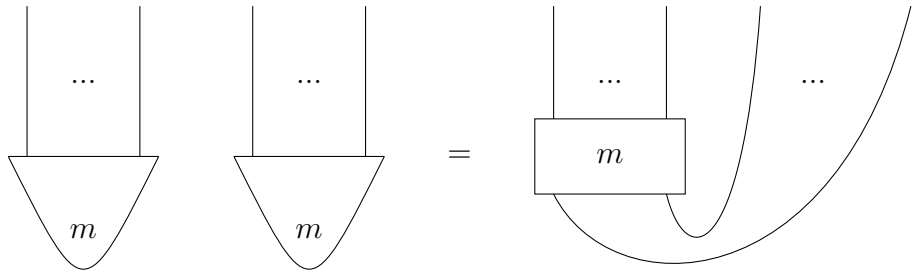

Finally, applying $I\ot A^{\ot m-1}\ot I$ to both sides of Relation (5), and applying Lemma \ref{lem:red} (2) $m-1$ times, 
we obtain the relation (b) of Fig. 3, and the proof of (1) is complete.

To prove (2), note that it follows from (1) that 
\[
\Sigma_{m+1}\circ (I^{\ot m-1}\ot U)=f_m(\delta) (\Sigma_m\ot I)\circ(I^{\ot m-1}\ot U).
\]
But the left side of this equation is evidently zero, while the right side is an invetible multiple of $f_m(\delta)\Sigma_m\ot I$.
It follows that $f_m(\delta)=0$, and hence by (1), that $\Sigma_{m+1}=0$.

Finally to prove (3), observe that $f_m(\delta)=0$ implies that $\delta(\delta-1)\dots(\delta-(m-1))=\delta(m-1)!$.
Comparing this to the relation $\delta(\delta-1)\dots(\delta-(m-1))=m!$ of Lemma \ref{lem:dreln}, we see that $\delta=m$.
\end{proof}

\begin{remark}\label{rem:zero} Although Theorem \ref{thm:comp} has been proved under the assumption that $\Sigma_m\neq 0$,
the conclusions (1) and (2) evidently remain true if $\Sigma_m=0$, but (3) fails. The application to invariant theory is also 
predicated upon this assumption.
\end{remark}

\section{A covariant functor-application to invariant theory.}

\subsection{The main theorem}
The results of the previous section indicate that the category of relevance to the invariant theory of $\SO_m$ is
$\wt\CB(m)$. Our main theorem is as follows.

\begin{theorem}\label{thm:main}
Let $V$ be $\C^m$, equipped with a nondegenerate symmetric bilinear form $(\;,\;)$, and let $G=\SO(V)$. Let $\CT$ be the full
tensor subcategory of finite dimensional representations of $G$ generated by $V$ under tensor product.

There is an {\em equivalence} of categories $\CF:\wt\CB(m)\lr\CT$ defined by $\CF(r)=V^{\ot r}$ for $r\in\N$,
$\CF(I)=\id:V\to V$, $\CF(X):V\ot V\to V\ot V$ is interchange of tensor factors, $\CF(A):V\ot V\to V$ is the map
$v\ot w\mapsto (v,w)$, $\CF(U):1\mapsto \check C$ and $\CF(\Delta_m)=\Lambda$, where $\Lambda$ is the harmonic homomorphism
in Example \ref{ex:1}.
\end{theorem}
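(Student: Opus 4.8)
The plan is to verify the four properties that make $\CF$ an equivalence: that it is well defined, essentially surjective, full, and faithful. Essential surjectivity is immediate, since every object of $\CT$ is $V^{\ot r}=\CF(r)$. For well-definedness I must check that the assignment on generators respects the four families of relations of Definition \ref{def:wbrcat}: relations of type (1) hold because the Brauer functor $\CB(m)\to\CT$ of \cite{LZ5} is already well defined; relations (2) (harmonicity) and (3) (antisymmetry) hold because $\CF(\Delta_m)$ is (a nonzero scalar multiple of) $\Lambda=e_1\wedge\dots\wedge e_m\in\wedge^m V\subset V^{\ot m}$, so any contraction of two of its tensor slots vanishes and interchange of two slots multiplies it by $-1$; and relation (4), $\Delta_m\circ\Delta_m^*=\Sigma_m$, is an identity of rank-one operators on the line $\wedge^m V$, verified by a scalar computation (consistently with $\CF(\Sigma_m)=\sum_{\pi\in\Sym_m}\ve(\pi)\tau_\pi$ acting as $m!$ times the projection onto $\wedge^m V$, and with Lemma \ref{lem:dreln}, which also forces $\delta=m$, as it must since $\CF$ sends a loop to $\dim V=m$).

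Fullness is precisely the FFT for $\SO_m$: using the duality on $\CT$ we have $\CT_i^j\cong(V^{\ot(i+j)})^{\SO(V)}$, and Corollary \ref{cor:fftso}, together with Theorem \ref{thm:so-inv} and Example \ref{ex:1}, shows this space is spanned by Brauer morphisms and by morphisms obtained from the $\Sym_n$-orbit of $\Lambda\ot\check C^{\ot k}$ by tensoring and composing with Brauer morphisms; since $\Lambda=\CF(\Delta_m)$ (up to scalar), $\check C=\CF(U)$, $\tau=\CF(X)$ and $\Sym_n\subset B_n^n=\CF(\CB_n^n)$, all of these lie in the image of $\CF$. For faithfulness, note first that because the mutually inverse isomorphisms $\bU_p^q,\bA_q^r$ of \cite[Cor. 2.8]{LZ5} are intertwined by $\CF$, it suffices to treat $\CF\colon\wt\CB(m)_0^N\to\CT_0^N$. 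I then establish a \emph{normal form}: every morphism in $\wt\CB(m)_0^N$ is a linear combination of (i) images of Brauer morphisms in $\CB(m)_0^N$, and (ii) ``one-$\Delta_m$'' morphisms $\gamma\circ\Delta_m$ with $\gamma\in\CB(m)_m^N$ Brauer. The key derived identity is that bending the last $m$ strands of $\Delta_m\ot\Delta_m\in\wt\CB(m)_0^{2m}$ downwards produces $\Delta_m\circ\Delta_m^*=\Sigma_m$ by relation (4); hence $\Delta_m\ot\Delta_m=\bU_0^m(\Sigma_m)$ is a Brauer morphism. Since a general diagram in $\wt\CB(m)_0^N$ has the form $\beta\circ(\Delta_m^{\ot k})$ for a Brauer morphism $\beta$ (after using $\bU,\bA$ to turn every $\Delta_m^*$-vertex into a $\Delta_m$-vertex), iterating this identity replaces $\Delta_m^{\ot k}$ by $\Delta_m^{\ot(k\bmod 2)}$ tensored with a Brauer morphism, which yields the normal form.

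Now suppose $\CF(d)=0$, and write $d=d_0+d_1$ in normal form. Since $\CT_0^N=(V^{\ot N})^{\Or(V)}\oplus(V^{\ot N})^{\Or(V),\det}$ and $\CF(d_0)$ lies in the first summand (Brauer morphisms are $\Or(V)$-invariant, so $\CF(d_0)\in B_0^N$), while $\CF(d_1)=\CF(\gamma)\circ\Lambda$ lies in the second (it transforms by $\det$), we get $\CF(d_0)=0$ and $\CF(d_1)=0$ separately. For $d_0$: it is $F(\tilde d_0)$ for some $\tilde d_0\in\CB(m)_0^N$, where $F\colon\CB(m)\to\wt\CB(m)$ is the natural functor; then $\CF\circ F(\tilde d_0)=0$, so $\tilde d_0\in\Ker(\CF\colon\CB(m)\to B)=\langle\Sigma_{m+1}\rangle$ by the SFT for $\Or_m$ of \cite{LZ4, LZ5}, whence $d_0=F(\tilde d_0)=0$ because $F(\Sigma_{m+1})=\Sigma_{m+1}=0$ in $\wt\CB(m)$ by Theorem \ref{thm:comp}(2). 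For $d_1=\gamma\circ\Delta_m$: by relation (4), $\gamma\circ\Sigma_m=\gamma\circ\Delta_m\circ\Delta_m^*$, so $\CF(\gamma\circ\Sigma_m)=\CF(d_1)\circ\CF(\Delta_m^*)=0$; as $\gamma\circ\Sigma_m$ is a Brauer morphism, the argument just given shows $\gamma\circ\Sigma_m=0$ in $\wt\CB(m)$; finally relation (3) gives $\Sigma_m\circ\Delta_m=m!\,\Delta_m$, so $d_1=\gamma\circ\Delta_m=\tfrac1{m!}\gamma\circ\Sigma_m\circ\Delta_m=0$. Hence $d=0$, $\CF$ is faithful, and the proof that $\CF$ is an equivalence is complete.

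The main obstacle is the faithfulness step, and within it the normal-form reduction — in particular the claim that two or more $\Delta_m$-vertices always collapse into the Brauer part, which I intend to handle by the bending identity $\Delta_m\ot\Delta_m=\bU_0^m(\Sigma_m)$ — together with the essential reliance on the coincidence of two facts: Theorem \ref{thm:comp}(2), that $\Sigma_{m+1}$ already vanishes in $\wt\CB(m)$, and the SFT for $\Or_m$ of \cite{LZ4, LZ5}, that $\Sigma_{m+1}$ generates the kernel of the Brauer functor. It is precisely this coincidence that forces all the maps on $\Hom$ spaces to be isomorphisms.
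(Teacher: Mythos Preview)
Your proposal is correct and follows essentially the same route as the paper. The paper's Lemma \ref{lem:str} is exactly your normal-form reduction (the collapse of $\Delta_m\ot\Delta_m$ to a Brauer morphism via relation (4) is the paper's ``relation (5) in Fig.~5''), and the faithfulness argument proceeds identically: split into a Brauer part and a one-$\Delta_m$ part using the $\Or(V)$-eigenspace decomposition of $\CT_0^N$, kill the Brauer part via the SFT for $\Or_m$ together with Theorem \ref{thm:comp}(2), and kill the one-$\Delta_m$ part by first composing with $\Delta_m^*$ to land back in the Brauer part and then recovering the original morphism via $\Sigma_m\circ\Delta_m=m!\,\Delta_m$. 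Your preliminary reduction to $\wt\CB(m)_0^N$ via $\bU,\bA$ is a cosmetic simplification the paper does not make, but otherwise the two arguments coincide.
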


Before giving the proof, we shall make some elementary observations concerning the structure of $\wt\CB(m)$.
\begin{lemma}\label{lem:str} Let $\wt\CB_0$ be the subcategory of $\wt\CB(m)$ generated by all Brauer 
diagrams (i.e. by the morphisms $I,X,A$ and $U$). 

\begin{enumerate}
\item Each diagram of $\wt\CB(m)$ is either in $\wt\CB_0$ or is obtained 
from $\Delta_m$ by tensoring and composing with elements of $\wt\CB_0$.
\item Let $s,t\in\N$. Then 
\[
\wt\CB_s^t=\wt\CB_{s,0}^t\oplus \wt\CB_{s,1}^t,
\]
where $\wt\CB_{s,0}^t$ is the span of the Brauer diagrams in $\wt\CB_s^t$, and 
$\wt\CB_{s,1}^t$ is the span of diagrams of the type described in (1).
\end{enumerate}
\end{lemma}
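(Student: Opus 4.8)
The statement of Lemma \ref{lem:str} is essentially a normal-form assertion for morphisms in $\wt\CB(m)$: since $\Delta_m$ is the only generator outside the Brauer category, every morphism is built from $\Delta_m$ and Brauer diagrams, and one wants to see that at most \emph{one} copy of $\Delta_m$ is ever needed, giving the two-term decomposition in part (2). The plan is to prove (1) first, by analysing how two copies of $\Delta_m$ can appear in a diagram, and then deduce (2) by checking that the two summands only overlap in $0$.

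For part (1), I would argue by induction on the number $k$ of occurrences of $\Delta_m$ in a given diagram $D$. If $k=0$ the diagram lies in $\wt\CB_0$; if $k=1$ it is manifestly of the stated form. For $k\geq 2$, pick two of the $\Delta_m$-boxes. Using the isomorphisms $\bU_p^q$, $\bA_q^r$ of \cite[Cor. 2.8]{LZ5}, which only involve Brauer morphisms and so do not change the "type" of the diagram, one can slide one $\Delta_m^*$ into position against the other $\Delta_m$. The key point is Relation (4) of Definition \ref{def:wbrcat}, $\Delta_m\circ\Delta_m^* = \Sigma_m$: whenever the two $\Delta_m$-boxes are in the "facing" configuration $\Delta_m\circ\Delta_m^*$ (after the Brauer-morphism gymnastics), they collapse to $\Sigma_m\in\wt\CB_0$, eliminating two copies of $\Delta_m$ at once and dropping to the $k-2$ case; when they appear with the same orientation $\Delta_m\otimes\Delta_m$ (or $\Delta_m^*\otimes\Delta_m^*$), one uses the Harmonicity relation (2) together with $\Delta_m^*\circ\Delta_m = m!$ from Lemma \ref{lem:dreln} to turn a pair of like-oriented boxes, once a contraction is introduced between them in the course of forming a morphism, into a scalar times one fewer $\Delta_m$. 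Iterating, any diagram reduces to one with $\leq 1$ occurrence of $\Delta_m$, which is the assertion of (1). (One should be a little careful that "diagram" here means a composite of the generators; the induction is on a chosen expression, and the conclusion is that the morphism it represents lies in the asserted span.)

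For part (2), having (1) in hand, $\wt\CB_s^t$ is spanned by $\wt\CB_{s,0}^t$ (Brauer diagrams) together with $\wt\CB_{s,1}^t$ (diagrams obtained from a single $\Delta_m$ by tensoring and composing with Brauer diagrams), so $\wt\CB_s^t = \wt\CB_{s,0}^t + \wt\CB_{s,1}^t$; it remains to see the sum is direct. For this one can invoke the functor $\CF$ of Theorem \ref{thm:main}: it is faithful (this is proved there, and it is legitimate to use it since the excerpt permits assuming earlier results), and it sends $\wt\CB_{s,0}^t$ onto $B_s^t = (\CT_s^t)^{\Or(V)}$ and $\wt\CB_{s,1}^t$ onto the span of the $\det$-type invariants, i.e. onto the second summand in Corollary \ref{cor:fftso}. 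Since that corollary asserts the decomposition $(\CT_s^t)^{\SO(V)} = B_s^t \oplus (\text{span of } \det\text{-invariants})$ is a genuine direct sum, faithfulness of $\CF$ pulls the directness back to $\wt\CB(m)$, giving $\wt\CB_s^t = \wt\CB_{s,0}^t \oplus \wt\CB_{s,1}^t$.

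\textbf{Main obstacle.} The delicate step is the inductive reduction in (1): making precise, purely diagrammatically, that \emph{every} configuration of two $\Delta_m$-boxes inside a composite morphism can be normalised—via the $\bU,\bA$ isomorphisms—into one of the two canonical local pictures ($\Delta_m\circ\Delta_m^*$ or a pair joined by a contraction arc), so that Relation (4), Harmonicity, and $\Delta_m^*\Delta_m = m!$ can be applied to drop the count. One must check that no "entangled" arrangement of the boxes (e.g. with strands of one threading through the other) obstructs this, and that the bookkeeping of which strands connect where is handled uniformly; this is the sort of planar-diagram argument that is routine in spirit but needs care to state cleanly.
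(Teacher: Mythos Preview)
Your argument for part (1) is the same idea as the paper's: reduce the number of $\Delta_m$-boxes two at a time. The paper does this in one stroke via Relation (5) (Figure 5), which is simply $\bU_0^m$ applied to Relation (4) and reads $\Delta_m\otimes\Delta_m=\bU_0^m(\Sigma_m)\in\wt\CB_0$. Since $\Delta_m^*=\bA_m^0(\Delta_m)$ is already ``$\Delta_m$ dressed by Brauer morphisms'', any word with two $\Delta_m$-boxes can be rewritten to contain $\Delta_m\otimes\Delta_m$ and hence collapses into $\wt\CB_0$. Your case split (``facing'' vs.\ ``same orientation'') is therefore unnecessary, and your Case 2 is not well argued as written: for the bare morphism $\Delta_m\otimes\Delta_m\in\wt\CB_0^{2m}$ there is no contraction between the boxes, so neither harmonicity nor $\Delta_m^*\Delta_m=m!$ applies directly. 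This is not fatal, since your Case 1 together with the $\bU,\bA$ isomorphisms already suffices, but you should say so rather than add a spurious second case.

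Your argument for part (2), however, is circular. You invoke faithfulness of $\CF$ from Theorem \ref{thm:main}, but the paper's proof of that theorem explicitly invokes Lemma \ref{lem:str}(2) (``each element $\beta\in\ker(\CF_s^t)$ is \emph{uniquely} of the form $\beta=\beta_0+\beta_1$\ldots''). Theorem \ref{thm:main} is \emph{stated} before the lemma but \emph{proved} after it and using it, so it is not an ``earlier result'' you may assume. The paper's intended (admittedly terse) argument for directness is a $\Z/2$-grading: assign degree $0$ to $I,X,A,U$ and degree $1$ to $\Delta_m$, and check that every defining relation (1)--(4) of Definition \ref{def:wbrcat} is homogeneous for this grading (relation (4) equates a degree-$2$ morphism with a degree-$0$ one, which agree modulo $2$). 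Hence each Hom space of $\wt\CB(m)$ is $\Z/2$-graded, with $\wt\CB_{s,0}^t$ the degree-$0$ part and $\wt\CB_{s,1}^t$ the degree-$1$ part. That is the sense in which the two types of diagram are ``complementary'', and it gives the direct sum without any appeal to $\CF$.
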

\begin{proof}[Proof of Lemma \ref{lem:str}]
If the diagram $D\in\wt\CB(m)$ is not in $\wt\CB_0$, then it may be expressed as a `word' in the generators $I,X,A,U$ and $\Delta_m$, 
with connectives $\ot$ (tensor product) and $\circ$ (composition), since $\Delta_m^*=\bA_m^0(\Delta_m)$. 
But the relation (5) in Fig. 5 above shows that any diagram with two consecutive
occurrences of $\Delta_m$, is equal in $\wt\CB$ to and element of $\wt\CB_0$. Thus we may assume that there is precisely one
occurrence of $\Delta_m$ in the word expression for $D$. This proves (1).

The statement (2) is an immediate consequence of (1), since each $\Hom$ space is spanned by diagrams, and the two
types of diagrams in (1) are complementary.
\end{proof}

\begin{proof}[Proof of Theorem \ref{thm:main}]
 We have seen \cite{LZ5} that the relations satisfied by $I,X,A$ and $U$ are satisfied by their images under the functor
$\CF$. The relations (1) to (4) of definition \ref{def:wbrcat} are clearly satisfied by $\Lambda$, so that we do have a functor.

It remains only to see that $\CF$ defines isomorphisms on $\Hom$ spaces. But Theorem \ref{thm:so-inv} states precisely 
that $\CF$ is surjective on $\Hom$ spaces (the FFT). We are therefore reduced to proving the injectivity of $\CF$ on $\Hom$
spaces, which is the SFT for $\SO_m$. 

By Lemma \ref{lem:str}(2), each element $\beta\in\ker(\CF_s^t:\Hom_{\wt\CB(m)}(s,t)\lr\Hom_G(V^{\ot s},V^{\ot t})$
is uniquely of the form $\beta=\beta_0+\beta_1$, where $\beta_i\in\wt\CB_{s,i}^t$ ($i=0,1$). Moreover $\CF_s^t$ maps
$\wt\CB_{s,0}^t$ to $\Hom_{\Or(V)}(V^{\ot s},V^{\ot t})$, and $\wt\CB_{s,1}^t$ to the space of skew invariants 
for $\Or(V)$. It follows that $\beta\in\ker(\CF_s^t)$ if and only if $\beta_i\in\ker(\CF_s^t)$ for $i=0,1$.

Now \cite[Thm. 4.8]{LZ5} states that the image of an element $\beta$ of $\wt\CB(m)_0$ under $\CF$ is zero if and only if 
$\beta$ is in the ideal $\langle\Sigma_{m+1}\rangle$
of morphisms generated under the operations of a tensor category by $\Sigma_{m+1}$.

By Theorem \ref{thm:comp}, this ideal of morphisms in zero in $\wt\CB(m)$. This proves that $\beta_0$ is in 
$\langle\Sigma_{m+1}\rangle$. As for $\beta_1$, note that because of its form, we have 
$\beta_1\circ(\Delta_m^*\ot I^{\ot r})\in\wt\CB_0$ and hence lies in $\langle\Sigma_{m+1}\rangle$
 for some $r$, and so $\beta_1=(m!)\inv \left(\beta_1\circ(\Delta_m^*\ot I^{\ot r})\right)\circ(\Delta_m\ot I^{\ot r})$
 is also in the ideal $\langle\Sigma_{m+1}\rangle$. 
 Hence $\beta\in\langle\Sigma_{m+1}\rangle$, and the proof is complete.
\end{proof}
\subsection{Dimensions of $\Hom$ spaces} The dimension of the space $\Hom_G(V^{\ot s},V^{\ot t})$, where $G$ is either 
$\Or(V)$ or $\SO(V)$, depends only on $r:=s+t$. Let $d(r)=\dim(\Hom_{\Or(V)}(\C,V^{\ot r})$
$(=\dim(\Hom_{\Or(V)}(V^{\ot s},V^{\ot t})$ when $s+t=r$). The following statement is easily deduced from Lemma 
\ref{lem:str} and Theorems \ref{thm:main} and \ref{thm:so-inv}.

\begin{corollary}\label{cor:dims}
If $s+t=r$, we have 
$$
\dim(\Hom_{\SO(V)}(V^{\ot s},V^{\ot t}))=\binom{r}{m} d(r-m)+d(r).
$$
\end{corollary}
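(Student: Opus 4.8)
The plan is to use the decomposition of $\wt\CB_s^t$ from Lemma~\ref{lem:str}(2) together with the equivalence $\CF$ of Theorem~\ref{thm:main}. Since $\CF$ is an equivalence of categories, it induces isomorphisms on $\Hom$ spaces, so $\dim \Hom_{\SO(V)}(V^{\ot s},V^{\ot t}) = \dim \wt\CB_s^t = \dim \wt\CB_{s,0}^t + \dim \wt\CB_{s,1}^t$. The first summand is easy: $\CF$ restricts to an isomorphism $\wt\CB_{s,0}^t \to \Hom_{\Or(V)}(V^{\ot s},V^{\ot t})$ (this is essentially the combination of Theorem~\ref{thm:fft} and the fact, used in the proof of Theorem~\ref{thm:main}, that $\langle\Sigma_{m+1}\rangle$ vanishes in $\wt\CB(m)$, so there is no kernel), and by definition this has dimension $d(s+t) = d(r)$. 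So the whole problem reduces to showing $\dim \wt\CB_{s,1}^t = \binom{r}{m} d(r-m)$.

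For the second summand I would argue as follows. By Lemma~\ref{lem:str}(1), every diagram in $\wt\CB_{s,1}^t$ is obtained from the single generator $\Delta_m$ by tensoring and composing with Brauer morphisms; equivalently, using the isomorphisms $\bU_p^q$, $\bA_q^r$ of \cite[Cor.~2.8]{LZ5} that reshuffle inputs and outputs, every such morphism can be brought (bijectively on the level of $\Hom$ spaces) to the form $(\Delta_m \ot I^{\ot (r-m)}) \circ \beta_0$ for some $\beta_0 \in \wt\CB_{s, \,0}^{\,?}$ — that is, $\wt\CB_{s,1}^t$ is a quotient of $\wt\CB_0 \ot \Delta_m \ot \wt\CB_0$. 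Under $\CF$ this matches the description in Corollary~\ref{cor:fftso}: $\wt\CB_{s,1}^t$ maps onto the space of $\Or(V)$-skew-invariant morphisms, which by Theorem~\ref{thm:so-inv} and Example~\ref{ex:1} is spanned by the images of $\Sym_r(\Lambda \ot \check C^{\ot (r-m)/2})$ appropriately reshaped. The key counting point is that $\Lambda = \CF(\Delta_m)$ is (up to scalar) the full antisymmetriser applied to a choice of $m$ of the $r$ tensor slots, and the remaining $r-m$ slots carry an arbitrary $\Or(V)$-invariant, i.e. an arbitrary Brauer diagram on $r-m$ points, of which there are $d(r-m)$ linearly independent ones. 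Since $\Delta_m$ absorbs permutations by relation (3) up to sign and its inputs are fully antisymmetric by relation~(2), the only remaining freedom is the choice of which $m$ of the $r$ slots are attached to $\Delta_m$, giving the factor $\binom{r}{m}$; the harmonicity relation~(2) is exactly what guarantees no collapse between different choices occurs beyond this, and relation~(4) (equivalently $\Delta_m^*\Delta_m = m!$, Lemma~\ref{lem:dreln}) guarantees these basis elements are nonzero and independent. Hence $\dim \wt\CB_{s,1}^t = \binom{r}{m} d(r-m)$.

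Adding the two contributions gives the stated formula. The main obstacle I anticipate is making the last counting argument rigorous: one must verify that the $\binom{r}{m} d(r-m)$ evident spanning elements are actually linearly independent, i.e. that there are no further relations in $\wt\CB(m)$ among diagrams containing one copy of $\Delta_m$ beyond those forced by relations~(2)–(4). This is where one genuinely needs that $\CF$ is faithful (the SFT, Theorem~\ref{thm:main}) rather than merely full: linear independence downstairs in $\Hom_{\SO(V)}$ follows from Theorem~\ref{thm:so-inv} and the explicit harmonic form of $\Lambda$, and faithfulness of $\CF$ then transports it back up to $\wt\CB_{s,1}^t$. Once one grants Theorem~\ref{thm:main}, the corollary is a bookkeeping exercise combining it with Lemma~\ref{lem:str} and the dimension count of $\Or(V)$-skew-invariants from Theorem~\ref{thm:so-inv}.
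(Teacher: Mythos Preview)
Your approach mirrors the paper's own outline exactly: the paper also says the corollary is ``easily deduced from Lemma~\ref{lem:str} and Theorems~\ref{thm:main} and~\ref{thm:so-inv}'', with no further detail. Your decomposition via $\wt\CB_s^t=\wt\CB_{s,0}^t\oplus\wt\CB_{s,1}^t$, identification of $\dim\wt\CB_{s,0}^t=d(r)$, and spanning set of size $\binom{r}{m}d(r-m)$ for $\wt\CB_{s,1}^t$ are all in line with what the paper intends.

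However, the gap you yourself flag is genuine and cannot be closed. Your claim that ``linear independence downstairs in $\Hom_{\SO(V)}$ follows from Theorem~\ref{thm:so-inv}'' is not justified: that theorem gives only a \emph{spanning} set for $(V^{\ot r})^{G,\det}$, never a basis, so faithfulness of $\CF$ buys you nothing here. Worse, the $\binom{r}{m}d(r-m)$ elements are \emph{not} linearly independent in general: the relation $\Sigma_{m+1}=0$ (Theorem~\ref{thm:comp}(2)), applied to $m+1$ positions that straddle $\Delta_m$ and the Brauer part, produces nontrivial dependencies among different placements of $\Delta_m$. Concretely, for $m=3$ and $r=5$ one computes directly (e.g.\ by decomposing $V_1^{\ot 5}$ for $\SO_3$) that $\dim\Hom_{\SO_3}(\C,V^{\ot 5})=6$, whereas the formula gives $\binom{5}{3}d(2)+d(5)=10\cdot 1+0=10$. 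A similar discrepancy occurs for $m=2$, $r=4$. Thus the stated equality is false as written; at best your argument (and the paper's sketch) establishes the inequality $\dim\Hom_{\SO(V)}(V^{\ot s},V^{\ot t})\le\binom{r}{m}d(r-m)+d(r)$.
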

Note that since $d(r)=0$ when $r$ is odd, depending on the parity of $m$, either or both terms on the right side 
of the above formula may vanish.


\end{document}